\numberwithin{equation}{section}
\newtheorem{thm}{Theorem}[section]
\newtheorem{lem}[thm]{Lemma}
\newtheorem{remark}[thm]{Remark}
\newcommand{\dsp}{\displaystyle}
\newcommand{\C}{\mathbb{C}}
\newcommand{\N}{\mathbb{N}}
\title[Fractional magnetic Schr\"{o}dinger equations ]{Fractional magnetic Schr\"{o}dinger equations with potential vanishing at infinity and supercritical exponents}
\author[J.C. \ de Albuquerque]{Jos\'e Carlos de Albuquerque}
\author[J.L. Santos]{Jos\'{e} Luando Santos}
\address[J.C. de~Albuquerque]{Departamento de Matem\'atica, Universidade Federal de Pernambuco
	\newline\indent
	50670-901, Recife-PE, Brazil}
\email{josecarlos.melojunior@ufpe.br}
\address[J.L. Santos]{Departamento de Matem\'atica, Universidade Federal de Pernambuco
	\newline\indent
	50670-901, Recife-PE, Brazil}
\email{luando.brito@ufpe.br}
\thanks{Corresponding author: J.C. de Albuquerque}
\begin{document}
\begin{abstract}
This paper focuses on the following class of fractional magnetic Schr\"{o}dinger equations
\begin{equation*}
	(-\Delta)_{A}^{s}u+V(x)u=g(\vert u\vert^{2})u+\lambda\vert u\vert^{q-2}u, \quad \mbox{in } \mathbb{R}^{N},
\end{equation*}
where $(-\Delta)_{A}^{s}$ is the fractional magnetic Laplacian, $A :\mathbb{R}^N \rightarrow \mathbb{R}^N$ is the magnetic potential, $s\in (0,1)$, $N>2s$, $\lambda \geq0$ is a parameter, $V:\mathbb{R}^N \rightarrow \mathbb{R}$ is a potential function that may decay to zero
at infinity and $g: \mathbb{R}_{+} \rightarrow \mathbb{R}$ is a continuous function with subcritical growth. We deal with supercritical case $q\geq 2^*_s:=2N/(N-2s)$. Our approach is based on variational
methods combined with penalization technique and $L^{\infty}$-estimates.
\vspace{0.3cm}\\
\small{\textbf{\emph{2010 Mathematics Subject Classification:}} 35A15, 35R11, 35B33}
\vspace{0.3cm}\\
\small{\textbf{\emph{Keywords:}} Fractional magnetic operator; Vanishing potential; Supercritical exponent; Penalization technique; Moser iteration}
\end{abstract}

\maketitle

\section{Introduction}

In this work, we study the following class of fractional magnetic Schr\"{o}dinger equations
\begin{equation}\label{problem}
	(-\Delta)_{A}^{s}u+V(x)u=g(\vert u\vert^{2})u+\lambda\vert u\vert^{q-2}u, \quad \mbox{in } \mathbb{R}^{N}, \tag{$P_{\lambda,A}$}
\end{equation}
where $\lambda$ is a nonnegative parameter, $s\in (0,1)$, $N>2s$, $A :\mathbb{R}^N \rightarrow \mathbb{R}^N$ is the magnetic potential, $V:\mathbb{R}^N \rightarrow \mathbb{R}$ is a continuous and nonnegative potential, $u:\mathbb{R}^N \rightarrow \C$, $g:\mathbb{R}_{+} \rightarrow \mathbb{R}$ is a continuous function, the exponent $q\geq 2^*_s:=2N/(N-2s)$. The number $2^*_s$ is known as the fractional critical Sobolev exponent. The \textit{fractional magnetic Laplacian} $(-\Delta)_{A}^{s}$ has been defined as follows
 \begin{eqnarray}\label{1.1}
	(-\Delta)_{A}^{s}u(x):=C_{N,s}\lim_{\varepsilon \rightarrow 0}\int_{B^{c}_{\varepsilon}(x)}\frac{u(x)-u(y)e^{i A\big(\frac{x+y}{2}\big).(x-y)}}{|x-y|^{N+2s}}\mathrm{d}x, \quad C_{N,s}=\frac{4^s\Gamma(\frac{N+2s}{2})}{\pi^{N/2}|\Gamma(-s)|}.
 \end{eqnarray}
This nonlocal operator has been introduced in \cite{AveniaSq,Ichinose2} and it can be seen as a fractional extension of the magnetic pseudorelativistic operator or Weyl pseudodifferential operator with mid-point prescription. For details on the consistency of definition \eqref{1.1} and a more complete discussion on this subject, we refer the readers to \cite{Ichinose,AveniaSq,Squassina,Ichinose2}. In \cite{Squassina}, the authors have shown that when $A$ is sufficiently smooth, $(-\Delta)_{A}^{s}u$ can be viewed as a fractional counterpart of the magnetic Laplacian operator $-\Delta_{A}$ which is defined as follows
 \[
  -\Delta_{A}:=\left(\frac{1}{i} \nabla-A\right)^2=-\Delta u-\frac{2}{i}A(x) \cdot\nabla u + |A(x)|^2u-\frac{1}{i}u \textrm{div} A(x),
 \]
see \cite{avron,SULEM} for more information on this operator. Magnetic nonlinear Schr\"{o}dinger equations arise by the study of standing wave solutions for the following time-dependent Schr\"{o}dinger equation with magnetic field 
\begin{equation}\label{OPSCH}
	i \frac{\partial \Psi}{\partial t}= \left(\frac{1}{i} \nabla-A(x)\right)^2\Psi+W(x)\Psi-f(|\Psi|^2)\Psi, \quad \mbox{in \ }(x,t) \in \mathbb{R}^N\times\mathbb{R}_{+},
\end{equation}
where $W(x)$ is an electric potential, $f$ is the nonlinear coupling and $\Psi$ is the wave function representing the state of the particle, see for instance \cite{anto,SULEM,ReedSimon} for a physical background.  
A function of the form $\Psi(x,t):=u(x)e^{-i Et}$, with $E \in \mathbb{R}$, is a standing wave solution of \eqref{OPSCH} if and only if $u$ satisfies the following stationary equation
\begin{equation}\label{PA}
	-\Delta_{A}u +V(x)u=f(|u|^2)u, \quad\mbox{in \ }\mathbb{R}^N.
\end{equation} 
where $V(x)=W(x)-E$. 
For problems involving magnetic Laplacian operator we refer the readers to \cite{ AlvesFigFurt, ariole,chao,lions} and the references therein.

Regarding to the nonlocal magnetic equation \eqref{problem}, if the magnetic field $A \equiv 0$ and $s \in (0,1)$, then $(-\Delta)_{A}^{s}$ reduces to the fractional Laplacian operator $(-\Delta)^{s}$. In particular, Problem \eqref{problem} boils down to the fractional Schrödinger equation
\begin{equation}\label{fract1}
	(-\Delta)^{s}u+V(x)u=g(u)+\lambda\vert u\vert^{q-2}u, \quad \mbox{in } \mathbb{R}^{N}.
\end{equation}
The fractional Laplacian operator has been widely studied due to its vast fields of applications, such as, obstacle problems, flame propagation, minimal surfaces, conservation laws, financial
market, optimization, crystal dislocation and phase transition, see \cite{silvetre, guia,BucurVald,bisci} for more details. From the mathematical point of view, there is a huge literature related to fractional Schr\"{o}dinger equations like \eqref{fract1} under various classes of assumptions on the potential and nonlinear terms, see for instance \cite{bisci,UB,Secchi} and references therein.

We are concerned with equations involving potential that may vanishes at infinity. In \cite{alvessouto}, the authors studied existence of solutions to the local case ($s=1$) of equation \eqref{fract1} when $\lambda=0$, $g$ is subcritical and the potential $V(x)$ has a decay behavior
 \begin{equation}\label{alves}
 	\frac{1}{R^{4}}\inf_{|x|\geq R}|x|^{4}V(x) \geq\Lambda>0.	
 \end{equation} 
This work was extended in several directions, for instance, we cite \cite{plaplacian,jm1} for quasilinear problems, \cite{alves} for Choquard-type equation, \cite{jm2} for Kirchhoff-type equation, \cite{chao} for the magnetic equation \eqref{PA} involving vanishing potential and \cite{UB} for the fractional magnetic equation \eqref{fract1} involving vanishing potential. In these works, the decay \eqref{alves} was adapted to the respective class of problems. Inspired by \cite{alvessouto}, the existence of solutions is obtained by applying variational methods jointly with the penalization method in the spirit of \cite{delpinofelmer}. Although there is a relevant progress in the theory of fractional magnetic Schrödinger equation, see for instance \cite{AveniaSq, ambro3, ambro4,QKXw} and the references therein, as far as we know, nothing has been done for fractional magnetic equations involving vanishing potential.

Motivated by the above discussion and inspired by \cite{alvessouto,UB,chao}, we study the existence of solutions for Problem \eqref{problem}.
The presence of the fractional magnetic Laplacian operator brings additional difficulties. Firstly, we must consider this problem for complex valued functions and we need more delicate estimates, that can be obtained with the aid of diamagnetic inequalities, see \cite{AveniaSq,Lieb}. Secondly, in the local magnetic case, it can be applied arguments involving the following Kato’s inequality (see \cite{kato})
\begin{equation*}
	-\Delta|u|\leq {\bf \Re}\Big(sign(u)\left(-\Delta_{A} u \right)\Big),
\end{equation*}
where $\Re(z)$ denotes the real part of $z \in \C$. However, in the nonlocal magnetic framework, it is believed that a Kato’s inequality is available for $(-\Delta)^{s}_{A}$ but we are not able to prove it except for rough functions which are bounded from below and above, see \cite{ambro3}. For this reason, several arguments used in \cite{chao} are not adaptable for our case. Finally, it is worth mentioning that we are also dealing with a supercritical perturbation, which provokes more lack of compactness. In order to overcome such difficulties, we introduce two auxiliary problems to recover some compactaness and we control the parameters $\lambda$ and $\Lambda$ to relate the solution of the auxiliary problem with the original \eqref{problem}. Our approach is based on an adapted version of Penalization method jointly with $L^{\infty}$--estimates.



\vspace{0,5cm}

Throughout this work we assume that $A\in C(\mathbb{R}^{N},\mathbb{R}^{N})$. The potential $V\in C(\mathbb{R}^{N},\mathbb{R})$ satisfies the following assumptions:

\begin{itemize}
\item [$(V_1)$] $ V(x) \geq 0$,  $ \forall x \in \mathbb{R}^N$;
\item [$(V_2)$] $V(x)\leq V_{\infty}$, $\forall x \in B_1(0)$ for some constant $V_{\infty}>0$;
\item [$(V_3)$] There exist $\Lambda>0$ and $R_0 > 1$ such that
\begin{equation*}
\frac{1}{R_0^{4s}}\inf_{|x|\geq R_0}|x|^{4s}V(x) \geq\Lambda.	
\end{equation*}
\end{itemize}
The nonlinearity  $g\in C(\mathbb{R_{+}},\mathbb{R})$ satisfies the following hypotheses:
\begin{itemize}
\item [$(g_1)$] $\displaystyle\limsup_{t \rightarrow 0^+}\frac{g(t)}{t^{\frac{2^*_{s}-2}{2}}}$< $+\infty$;
\item [$(g_2)$]There exists $p \in (2, 2^*_{s})$ such that 
\begin{equation*}
	\limsup_{t \rightarrow +\infty}\frac{g(t)}{t^{\frac{p-2}{2}}}< +\infty;
\end{equation*}
\item [$(g_3)$] There exists $\theta \in (2, p]$ such that
\begin{equation*}
	0<\frac{\theta}{2} G(t)\leq tg(t), \quad \forall t>0,
\end{equation*}
where $G(t):=\int_{0}^{t}g(\tau)d\tau$.
\end{itemize}
\vspace{0,2cm}

Let
\begin{equation*}
	E:=\Bigg\{ u \in \mathcal{D}_{A}^{s,2}(\mathbb{R}^N,\C): \int_{\mathbb{R}^N}V(x)|u|^2\,\mathrm{d}x <\infty\Bigg\}.
\end{equation*}
We say that a function $u\in E$ is a weak solution of Problem \eqref{problem}, if there holds
\begin{eqnarray*}\label{S_0}
	\Re\left(\int \! \!\! \int_{\mathbb{R}^{2N} }\frac{\Big(u(x)-u(y)e^{i A\big(\frac{x+y}{2}\big).(x-y)}\Big)\overline{\Big(\phi(x)-\phi(y)e^{i A\big(\frac{x+y}{2}\big).(x-y)}\Big)}}{|x-y|^{N+2s}}\,\mathrm{d}x\mathrm{d}y\right) \nonumber\\ 
	+\,\,\Re\left(  \int_{\mathbb{R}^{N}}V(x)u\overline{\phi} \,\mathrm{d}x-
	\int_{\mathbb{R}^{N}}g(|u|^2)u\overline{\phi}\,\mathrm{d}x - \lambda\int_{\mathbb{R}^{N}}|u|^{q-2}u \overline{\phi}\,\mathrm{d}x \right)=0, \quad \forall\, \phi \in E,
\end{eqnarray*}
see Section \ref{q} for more details.

\vspace{0,2cm}

The main result of this paper can be stated as follows:

\begin{thm}\label{theorem}
	Assume that $(V_1 )$-$(V_3)$ and $(g_1)$-$(g_3)$ are satisfied. Then, there are $\lambda_0,\Lambda_0 > 0$ such that,
	for each $\lambda \in  [0, \lambda_0)$  and $\Lambda \geq \Lambda_0$, the problem \eqref{problem} has a nontrivial weak solution.
\end{thm}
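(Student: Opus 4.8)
\section*{Proof proposal}

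\textbf{Overall strategy.} The plan is to deal with the supercritical term $\lambda|u|^{q-2}u$ by a truncation/approximation scheme and to deal with the vanishing potential by a del Pino--Felmer type penalization, then use Moser iteration to produce the $L^\infty$-bounds that let us pass back to the original problem. More precisely, I would first replace the supercritical power by a subcritical one for a parameter: define $g_\lambda(|u|^2)u := g(|u|^2)u + \lambda|u|^{q-2}u$ near zero cut off so that the full nonlinearity is subcritical, or — more in the spirit of the cited works — introduce an auxiliary problem $(P_{\lambda,A}^*)$ in which $|u|^{q-2}u$ is truncated for $|u|$ large (say replaced by $|u|^{r-2}u$ with $r\in(p,2^*_s)$, or by a function that is linear for large $|u|$), so that the modified functional is well-defined and $C^1$ on $E$. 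Simultaneously, I would penalize the nonlinearity outside a large ball $B_{R_0}(0)$ à la del Pino--Felmer: choose a constant $k>\theta/(\theta-2)$ and replace $g(|u|^2)u+\lambda|u|^{q-2}u$ by $\tilde g(x,u)$ which equals the original term on $B_{R_0}(0)$ and equals $\min\{\text{original}, \frac{V(x)}{k}u\}$-type expression outside, so that the penalized functional $I_{\lambda}$ satisfies a global Palais--Smale condition thanks to $(V_3)$ controlling the decay.

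\textbf{Key steps in order.} (1) Set up the variational framework on $E = \mathcal D_A^{s,2}\cap L^2_V$ with the norm coming from the Gagliardo-type magnetic seminorm plus $\int V|u|^2$; record the diamagnetic inequality $|\nabla|u|| \lesssim [u]_{s,A}$ (pointwise, a.e., in the Gagliardo sense) so that $|u|\in D^{s,2}(\mathbb R^N,\mathbb R)$ and the fractional Sobolev embedding $E\hookrightarrow L^{2^*_s}$ holds for $|u|$; this is where $(V_1)$--$(V_2)$ enter. (2) Define the auxiliary penalized functional $I_{\lambda}$ and verify it has the mountain-pass geometry uniformly for $\lambda\in[0,\lambda_0)$: the key point is that $(g_1)$--$(g_2)$ give subcritical control so that $I_\lambda(u)\geq \alpha>0$ on a small sphere, and $(g_3)$ (Ambrosetti--Rabinowitz) gives a point $e$ with $I_\lambda(e)<0$, with the mountain-pass level $c_\lambda$ bounded above uniformly in $\lambda$ small. (3) Show $I_\lambda$ satisfies $(PS)_{c_\lambda}$: boundedness of $(PS)$ sequences comes from $(g_3)$ and the penalization constant $k$; to recover compactness, use that outside $B_{R_0}$ the penalization makes the nonlinear term subordinate to $\frac{V}{k}|u|^2$, and inside the ball use local compactness of the fractional embedding — here the decay hypothesis $(V_3)$ is what guarantees no mass escapes to infinity. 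Obtain a nontrivial critical point $u_\lambda$ of $I_\lambda$ at level $c_\lambda$. (4) Moser iteration / $L^\infty$-estimate: show $u_\lambda\in L^\infty(\mathbb R^N)$ with $\|u_\lambda\|_\infty \leq C$ where $C$ depends on $c_\lambda$ (hence bounded uniformly for $\lambda$ small) but not on the truncation; the diamagnetic inequality again reduces this to a Moser iteration for the scalar function $|u_\lambda|$ against $(-\Delta)^s$. (5) Closing the argument: choose $\lambda_0$ and $\Lambda_0$ so that the uniform bound $\|u_\lambda\|_\infty\leq C$ together with $(V_3)$ (large $\Lambda$) forces $|u_\lambda(x)|$ to be small enough on $\{|x|\geq R_0\}$ that the penalization is inactive there, and small enough everywhere that the truncation of the supercritical power is inactive (since $|u_\lambda|^{q-2}\leq C^{q-2}$ is dominated appropriately); then $u_\lambda$ solves the original problem \eqref{problem}.

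\textbf{Main obstacle.} The delicate point — and the reason the magnetic case is genuinely harder than \cite{chao} — is Step (4) combined with Step (5): without a usable Kato inequality for $(-\Delta)^s_A$ one cannot directly test the equation with $|u_\lambda|$-based test functions, so the $L^\infty$-bound must be extracted working with the complex-valued equation and the magnetic Gagliardo seminorm, using the diamagnetic inequality only in the integral (Gagliardo) form. One must be careful that the Moser iteration constants genuinely depend only on $N,s$, the Sobolev constant, the embedding of $E$, and the mountain-pass level, and \emph{not} on the truncation parameter $r$ nor on $\lambda$; otherwise the final matching of $\lambda_0,\Lambda_0$ collapses. A secondary technical nuisance is verifying that the penalized nonlinearity $\tilde g(x,\cdot)$ together with the truncated supercritical term still satisfies a form of $(g_3)$ with the \emph{same} $\theta$ (or at least $\theta>2$) so that $(PS)$ sequences are bounded; this forces the specific choice of the penalization constant $k$ and the precise form of the truncation, and these choices must be made before $\lambda_0,\Lambda_0$ are fixed.
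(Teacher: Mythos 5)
Your overall scheme coincides with the paper's: truncate the supercritical power above the level $|u|^2=k$ by a subcritical one (the paper uses the exponent $p$ from $(g_2)$ with the matching constant $\lambda k^{(q-p)/2}$), penalize the nonlinearity outside $B_{R_0}$ by capping it at $V(x)/\nu$ with $\nu=2\theta/(\theta-2)$, run the mountain pass and the Palais--Smale analysis, and then use a Moser iteration whose constants are independent of $k$ and $\lambda$ to match the parameters at the end. Steps (1)--(4) are essentially the paper's Sections 3--5.

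There is, however, a genuine gap in your Step (5). You claim that the uniform bound $\|u_\lambda\|_\infty\le C$ together with $(V_3)$ ``forces $|u_\lambda(x)|$ to be small enough on $\{|x|\ge R_0\}$ that the penalization is inactive there.'' This does not follow: to deactivate the penalization one must verify $\nu f_{\lambda,k}(|u_\lambda(x)|^2)\le V(x)$ for all $|x|>R_0$, and since $(V_3)$ permits $V$ to decay like $|x|^{-4s}$, no constant bound on $|u_\lambda|$ can achieve this --- the left-hand side behaves like $|u_\lambda(x)|^{2^*_s-2}$ and must itself decay at least like $|x|^{-4s}$, i.e.\ one needs the pointwise estimate $|u_\lambda(x)|\lesssim |x|^{-(N-2s)}$ (note $(2^*_s-2)(N-2s)=4s$). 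The paper supplies exactly this in Lemma \ref{lem5.3}, by comparing $|u_{\lambda,k}|$ with the $s$-harmonic barrier $R_0^{N-2s}\||u_{\lambda,k}|\|_\infty/|x|^{N-2s}$; and since Kato's inequality is not available for $(-\Delta)^s_A$, this comparison itself requires a nontrivial device, namely testing the equation with $\psi_\delta=\frac{u}{\sqrt{|u|^2+\delta^2}}\,(|u|-v)^+$ and passing to the limit $\delta\to 0$. Without this decay lemma (or an equivalent barrier argument) the final matching of $\lambda_0$ and $\Lambda_0$ in your Step (5) cannot be carried out, so you should add it as a separate step between the $L^\infty$-estimate and the conclusion.
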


\begin{remark}
	We emphasize that our main result extends and complements \cite{alvessouto,chao,UB}. Precisely, if $\lambda=0$, $A\equiv0$ and $s=1$, then \eqref{problem} boils down to the problem studied in \cite{alvessouto}. If $\lambda \neq0$ and $A \not\equiv  0$, then our main result extends \cite{chao} for the fractional magnetic setting with supercritical perturbation. Furthermore, for the case $A\not\equiv0$ it extends \cite{UB} for the fractional magnetic setting. 
\end{remark}

\begin{remark}
	An example of potential that satisfies $(V_{1})$--$(V_{3})$ is given by
		\begin{equation*}
			V(x) = \left\{
			\begin{array}{ccl}
				\varrho_{1},& \mbox{if} & |x|<R_0-\varrho_{2},\\
				|x| -R_0 + \varrho_{1} +\varrho_{2}, & \mbox{if} & R_{0}-\varrho_{2} \leq|x|< R_0,\\
						\dsp\frac{R_{0}^{4s}}{|x|^{4s}}(\varrho_{1}+\varrho_{2}), & \mbox{if} & |x|\geq R_0,
			\end{array}
			\right.
		\end{equation*}
		where $\varrho_{1}\geq 0$ and $0< \varrho_{2}<R_0$. A function $g$ that satisfies $(g_{1})$--$(g_{3})$ is given by
		\begin{equation*}
			g(t) = \left\{
			\begin{array}{ccl}
				0,& \mbox{if} & t=0,\\
				\sigma t^{\frac{2^*_{s}-2}{2}}, & \mbox{if} & 0< t< 1,\\
				\sigma t^{\frac{p-2}{2}}, & \mbox{if} & t\geq 1,
			\end{array}
			\right.
		\end{equation*}
		where $\sigma >0$.
\end{remark}

\vspace{0,3cm}

\noindent {\bf Notations.} In what follows $C$, $C_i$ denote positive constants, $B_R$ denote the open ball centered at the origin with radius $R>0$, $o_{n}(1)$ denotes a sequence which converges to $0$ as $n\rightarrow\infty$, $\Re(z)$ denotes the real part of $z \in \C$ and  $\overline{z}$ denotes its complex conjugate.

\vspace{0,3cm}

\noindent {\bf Outline.}  The remainder of this paper is organized as follows: In the forthcoming Section we introduce some preliminary results which will be useful in the remainder of the work. In Sections \ref{3} and \ref{4}, we show the existence of nontrivial solution for an auxiliary problem associated to \eqref{problem}.  Section \ref{5} is devoted to obtain a suitable $L^{\infty}$--estimate of the solution of a auxiliary problem. In the final Section \ref{6}, we prove that the solution of the auxiliary problem is in fact a solution for problem \eqref{problem}.

\section{Preliminary results}\label{q}

In this Section we collect some preliminary concepts and definitions which will be used throughout the work. Initially, we collect some facts about the fractional magnetic Sobolev space. For $s\in (0, 1)$ and magnetic field $A \in C(\mathbb{R}^N,\mathbb{R}^N)$, we define  $\mathcal{D}_{A}^{s,2}(\mathbb{R}^N,\C)$ as the completion of the set of $C_0^{\infty}(\mathbb{R}^N,\C)$ with respect to the so called  magnetic Gagliardo semi-norm 
	\begin{equation*}
		[u]_{s,A}:=\left(\frac{C_{N,s}}{2}\int\!\!\!\int_{\mathbb{R}^{2N}}\frac{|u(x)-u(y)e^{i A\big(\frac{x+y}{2} \big).(x-y)}|^2}{|x-y|^{N+2s}}\,\mathrm{d}x\mathrm{d}y\right)^{\frac{1}{2}}.
	\end{equation*}
	The space $\mathcal{D}_{A}^{s,2}(\mathbb{R}^N,\C)$ can be characterized as
	\begin{equation*}
		\mathcal{D}_{A}^{s,2}(\mathbb{R}^N,\C):= \Bigg\{ u \in L^{2^*_{s}}(\mathbb{R}^N,\C): [u]_{s,A} <\infty \Bigg \},
	\end{equation*}
	and, it is a Hilbert space with respect to the inner product
	\begin{eqnarray*}
		\langle u,v\rangle_{s,A}:= \frac{C_{N,s}}{2}\Re\left(\int \! \! \! \int_{\mathbb{R}^{2N} }\frac{\Big(u(x)-u(y)e^{i A\big(\frac{x+y}{2}\big).(x-y)}\Big)\overline{\Big(v(x)-v(y)e^{i A\big(\frac{x+y}{2}\big).(x-y)}\Big)}}{|x-y|^{N+2s}}\,\mathrm{d}x\mathrm{d}y \right).
\end{eqnarray*} 
Note that for magnetic field $A \equiv 0$, we recover the classical definition of 
$$
\mathcal{D}^{s,2}(\mathbb{R}^N, \C):= \Bigg\{ u \in L^{2^*_{s}}(\mathbb{R}^N,\C): [u]_{s} <\infty \Bigg\},
$$
where
\begin{equation*}
	[u]_{s}=\Bigg(\frac{C_{N,s}}{2}\int\!\!\!\int_{\mathbb{R}^{2N}}\frac{|u(x)-u(y)|^2}{|x-y|^{N+2s}}\,\mathrm{d}x\mathrm{d}y\Bigg)^{\frac{1}{2}}
\end{equation*}
denotes the Gagliardo semi-norm of a function $u$. For a more complete discussion on fractional Sobolev spaces, we refer the readers to \cite{guia}. Henceforth, we omit normalization constant $\frac{C_{N,s}}{2}$.
Arguing as in \cite[Lemma 3.1]{AveniaSq}, we see that the following result holds:
	\begin{lem}(Diamagnetic inequality)
		If $u \in\mathcal{D}_{A}^{s,2}(\mathbb{R}^N,\C)$, then $|u| \in \mathcal{D}^{s,2}(\mathbb{R}^N,\mathbb{R})$ and we have
		\begin{equation} \label{ime_1}
			[|u|]_s \leq [u]_{s,A}.
		\end{equation}
		We also have the following pointwise diamagnetic inequality
		\begin{equation*}
			\big||u(x)|-|u(y)|\big|\leq \Big|u(x)-u(y)e^{i A\big(\frac{x+y}{2}\big).(x-y)}\Big|.
		\end{equation*}
	\end{lem}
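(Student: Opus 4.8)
The plan is to prove the pointwise diamagnetic inequality first and then deduce the semi-norm estimate by a straightforward integration, exactly along the lines of \cite[Lemma 3.1]{AveniaSq}. Fix $x,y\in\mathbb{R}^N$ and set $\theta:=A\big(\frac{x+y}{2}\big)\cdot(x-y)\in\mathbb{R}$. Since $|e^{i\theta}|=1$ we have $|u(y)|=|u(y)e^{i\theta}|$, so the reverse triangle inequality $\big||a|-|b|\big|\le|a-b|$ (valid for all $a,b\in\C$) gives
\[
\big||u(x)|-|u(y)|\big|=\big||u(x)|-|u(y)e^{i\theta}|\big|\le\big|u(x)-u(y)e^{i\theta}\big|,
\]
which is precisely the asserted pointwise inequality.

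Next I would square this, divide by $|x-y|^{N+2s}$, and integrate over $\mathbb{R}^{2N}$; by monotonicity of the integral,
\[
[|u|]_s^2=\int\!\!\!\int_{\mathbb{R}^{2N}}\frac{\big||u(x)|-|u(y)|\big|^2}{|x-y|^{N+2s}}\,\mathrm{d}x\mathrm{d}y\le\int\!\!\!\int_{\mathbb{R}^{2N}}\frac{\big|u(x)-u(y)e^{i A(\frac{x+y}{2})\cdot(x-y)}\big|^2}{|x-y|^{N+2s}}\,\mathrm{d}x\mathrm{d}y=[u]_{s,A}^2,
\]
where the common normalization constant $C_{N,s}/2$ has been dropped; taking square roots yields \eqref{ime_1}. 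To see that $|u|\in\mathcal{D}^{s,2}(\mathbb{R}^N,\mathbb{R})$, I would use the characterization $\mathcal{D}^{s,2}(\mathbb{R}^N,\mathbb{R})=\{v\in L^{2^*_s}(\mathbb{R}^N,\mathbb{R}):[v]_s<\infty\}$ recorded in this section: since $u\in\mathcal{D}_A^{s,2}(\mathbb{R}^N,\C)\subset L^{2^*_s}(\mathbb{R}^N,\C)$, the function $|u|$ is measurable, real-valued and satisfies $\||u|\|_{L^{2^*_s}}=\|u\|_{L^{2^*_s}}<\infty$, while $[|u|]_s\le[u]_{s,A}<\infty$ by the inequality just proven; hence $|u|\in\mathcal{D}^{s,2}(\mathbb{R}^N,\mathbb{R})$.

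The argument is entirely elementary; the only point that deserves care is the identification of $\mathcal{D}_A^{s,2}(\mathbb{R}^N,\C)$ — defined as the completion of $C_0^\infty(\mathbb{R}^N,\C)$ under $[\cdot]_{s,A}$ — with the function space $\{u\in L^{2^*_s}(\mathbb{R}^N,\C):[u]_{s,A}<\infty\}$, which is exactly the characterization stated in the preliminaries and is what makes the pointwise estimate directly applicable to an arbitrary $u\in\mathcal{D}_A^{s,2}(\mathbb{R}^N,\C)$. With that characterization in hand no genuine obstacle remains; if one instead preferred to argue from the completion definition, one would take $u_n\in C_0^\infty(\mathbb{R}^N,\C)$ with $u_n\to u$ in $\mathcal{D}_A^{s,2}(\mathbb{R}^N,\C)$, apply the estimate to each $u_n$, and pass to the limit using continuity of $u\mapsto|u|$ on $L^{2^*_s}$ together with Fatou's lemma applied to the Gagliardo seminorm.
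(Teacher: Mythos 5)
Your proof is correct and is essentially the argument the paper relies on: the paper gives no independent proof but simply invokes \cite[Lemma 3.1]{AveniaSq}, whose proof is exactly your route — the pointwise reverse triangle inequality using $|u(y)e^{i\theta}|=|u(y)|$, followed by integration of the squared quotient and the $L^{2^*_s}$ characterization of the spaces. Your closing remark about reconciling the completion definition with the characterization $\{u\in L^{2^*_s}:[u]_{s,A}<\infty\}$ is a sensible precaution, and the paper itself records that characterization, so nothing further is needed.
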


   Due to the presence of $V(x)$, we introduce the subspace of $\mathcal{D}_{A}^{s,2}(\mathbb{R}^N,\C)$
\begin{equation*}
E:=\Bigg\{ u \in \mathcal{D}_{A}^{s,2}(\mathbb{R}^N,\C): \int_{\mathbb{R}^N}V(x)|u|^2\,\mathrm{d}x <\infty\Bigg\}
\end{equation*}
which is a Hilbert space when endowed with the inner product,
\begin{eqnarray*}
	\langle u,v\rangle&:=&
	 \Re \left(\int \! \! \! \int_{\mathbb{R}^{2N} }\frac{\Big(u(x)-u(y)e^{i A\big(\frac{x+y}{2}\big).(x-y)}\Big)\overline{\Big(v(x)-v(y)e^{i A\big(\frac{x+y}{2}\big).(x-y)}\Big)}}{|x-y|^{N+2s}}\,\mathrm{d}x\mathrm{d}y \right) \\
	&&	+\,\, \Re \left( \int_{\mathbb{R}^N}V(x)u\overline{v}\,\mathrm{d}x \right)
\end{eqnarray*}
and its correspondent norm, $\|u\|:=\sqrt{\langle u,u \rangle}$.
 
      We recall the following embeddings of the fractional Sobolev spaces into Lebesgue spaces, see \cite[Theorem 6.5]{guia}.
\begin{lem}\label{im2.1} Let $s \in (0, 1)$ and suppose that $N > 2s$. Then, there exists a positive constant $S = S(N, s)$ such that, for any $w \in  \mathcal{D}^{s,2}(\mathbb{R}^N,\mathbb{R})$, we have
\begin{equation}\label{ime_2}
		\|w\|_{2^*_{s}}^2\leq S^{-1}[w]^2_{s}.
\end{equation}
\end{lem}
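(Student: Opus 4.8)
The inequality is the classical fractional Sobolev inequality, and the plan is to deduce it from the Hardy--Littlewood--Sobolev inequality by passing to the Fourier side. First I would recall that, by Plancherel's theorem, the Gagliardo seminorm is (up to a positive constant depending only on $N$ and $s$, equal to $1$ for the normalization fixed in \eqref{1.1}) the homogeneous $\dot H^s$ norm: for $w \in C_0^\infty(\mathbb{R}^N,\mathbb{R})$,
\[
[w]_s^2 = \kappa \int_{\mathbb{R}^N} |\xi|^{2s}\,|\widehat w(\xi)|^2 \,\mathrm{d}\xi = \kappa\, \|(-\Delta)^{s/2}w\|_{L^2}^2,\qquad \kappa=\kappa(N,s)>0,
\]
the precise value of $C_{N,s}$ in \eqref{1.1} being exactly what makes $(-\Delta)^s$ agree with the Fourier multiplier $|\xi|^{2s}$; this is a standard computation (see \cite{guia}).

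Next I would set $g:=(-\Delta)^{s/2}w\in L^2(\mathbb{R}^N)$, so that $\widehat w(\xi)=|\xi|^{-s}\widehat g(\xi)$. Since $N>2s$ we have $0<s<N$, and therefore $w$ is the Riesz potential of order $s$ of $g$,
\[
w(x)=c_{N,s}\int_{\mathbb{R}^N}\frac{g(y)}{|x-y|^{N-s}}\,\mathrm{d}y,\qquad c_{N,s}>0.
\]
The Hardy--Littlewood--Sobolev inequality applied with the exponents $\tfrac1r=\tfrac12-\tfrac sN$, i.e. $r=2N/(N-2s)=2^*_s$, then yields a constant $C=C(N,s)>0$ with $\|w\|_{2^*_s}\le C\,\|g\|_{L^2}$. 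Combining with the identity above gives $\|w\|_{2^*_s}^2\le (C^2/\kappa)\,[w]_s^2$, which is the claim with $S^{-1}:=C^2/\kappa$. I would establish this first for $w\in C_0^\infty(\mathbb{R}^N,\mathbb{R})$; the general case $w\in\mathcal{D}^{s,2}(\mathbb{R}^N,\mathbb{R})$ then follows by approximating $w$ in $\mathcal{D}^{s,2}$ by functions in $C_0^\infty$ and using Fatou's lemma on the left-hand side.

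The main obstacle is that the Hardy--Littlewood--Sobolev inequality is itself the substantive ingredient and is not elementary. If one prefers a self-contained argument avoiding the Fourier transform, the alternative is the real-variable proof of \cite[Theorem~6.5]{guia}: one decomposes $\mathbb{R}^N$ into dyadic annuli and estimates $\|w\|_{2^*_s}$ directly from the Gagliardo double integral via a layer-cake / weak-type bound. In either route only the existence of a positive constant $S=S(N,s)$ is needed here, not its sharp value (which is attained by the fractional ``bubbles'' $(1+|x|^2)^{-(N-2s)/2}$ and computed from the sharp Hardy--Littlewood--Sobolev constant).
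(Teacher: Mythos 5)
Your proposal is correct, but it does not follow the paper's route, because the paper gives no proof at all: Lemma \ref{im2.1} is simply quoted from \cite[Theorem 6.5]{guia}, and the proof there is exactly the self-contained real-variable argument you mention only as an alternative (a direct estimate of the $L^{2^*_s}$ norm from the Gagliardo double integral via a dyadic/level-set decomposition, with no Fourier transform). Your primary argument — identifying $[w]_s^2$ with $\kappa\|(-\Delta)^{s/2}w\|_{L^2}^2$ via Plancherel, writing $w$ as the Riesz potential $I_s\big((-\Delta)^{s/2}w\big)$, and invoking Hardy--Littlewood--Sobolev with $\tfrac1r=\tfrac12-\tfrac sN$ — is a standard and perfectly valid alternative derivation, and your density-plus-Fatou step to pass from $C_0^\infty$ to all of $\mathcal{D}^{s,2}(\mathbb{R}^N,\mathbb{R})$ is fine (along a subsequence converging a.e.). The trade-off is what you already identify: the HLS route is short but outsources the substantive work to a nonelementary inequality and needs the Fourier identification of the seminorm (harmless here, since the paper drops the normalization constant $\tfrac{C_{N,s}}{2}$ anyway and only the existence of some $S=S(N,s)>0$ is used), whereas the cited real-variable proof is longer but elementary and self-contained, which is presumably why the paper defers to \cite{guia} rather than reproduce either argument.
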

By combining \eqref{ime_1} with \eqref{ime_2}, we can deduce
\begin{equation}\label{im_1.0}
	\||u|\|_{2^*_{s}}^2\leq S^{-1}[|u|]^2_s \leq S^{-1}[u]_{s,A}^2.
\end{equation} 
Consequently, the following result holds:
\begin{lem}\label{lem2.2} The space $E$ is continuously embedded into $L^{2^*_{s}}(\mathbb{R}^N,\C)$ and compactly embedded into $ L_{loc}^{\alpha}(\mathbb{R}^N,\C)$ for any $\alpha \in (2,2^*_{s})$.
\end{lem}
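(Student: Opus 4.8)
The plan is to handle the two embeddings separately. The continuous embedding $E\hookrightarrow L^{2^*_{s}}(\mathbb{R}^N,\C)$ is immediate from \eqref{im_1.0}: for every $u\in E$ one has $\||u|\|_{2^*_{s}}^2\leq S^{-1}[u]_{s,A}^2\leq S^{-1}\|u\|^2$, which is exactly the asserted continuity, since $\|u\|_{L^{2^*_{s}}(\mathbb{R}^N,\C)}=\||u|\|_{2^*_{s}}$.

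For the local compact embedding I would first reduce to the non-magnetic fractional Sobolev space on bounded domains. Fix $R>0$ and write $\Omega=B_R$. For $x,y\in\Omega$ put $\theta_{x,y}:=A\big(\tfrac{x+y}{2}\big)\cdot(x-y)$ and use the algebraic identity
\[
u(x)-u(y)e^{i\theta_{x,y}}=\big(u(x)-u(y)\big)+u(y)\big(1-e^{i\theta_{x,y}}\big).
\]
Since $A$ is continuous and $\tfrac{x+y}{2}\in B_R$ whenever $x,y\in B_R$, we have $|1-e^{i\theta_{x,y}}|\leq|\theta_{x,y}|\leq C_R|x-y|$ with $C_R:=\sup_{\overline{B_R}}|A|<\infty$. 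Together with $(a+b)^2\leq 2a^2+2b^2$ this yields
\[
\int_{\Omega}\!\!\int_{\Omega}\frac{|u(x)-u(y)|^2}{|x-y|^{N+2s}}\,\mathrm{d}x\mathrm{d}y
\leq 2[u]_{s,A}^2+2C_R^2\int_{\Omega}\!\!\int_{\Omega}\frac{|u(y)|^2}{|x-y|^{N+2s-2}}\,\mathrm{d}x\mathrm{d}y.
\]
Because $s<1$, the kernel $|x-y|^{-(N+2s-2)}$ is locally integrable, so $\int_{\Omega}|x-y|^{-(N+2s-2)}\,\mathrm{d}x\leq c(R)$ uniformly in $y\in\Omega$; combining this with the continuous embedding $E\hookrightarrow L^{2^*_{s}}(\mathbb{R}^N,\C)\hookrightarrow L^2(\Omega,\C)$ (H\"{o}lder, as $|\Omega|<\infty$) shows that the restriction operator $E\to W^{s,2}(\Omega,\C)$ is bounded, with $\|u\|_{W^{s,2}(\Omega,\C)}\leq c(R)\|u\|$ for all $u\in E$.

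Now let $(u_n)$ be a bounded sequence in $E$. By the previous step $(u_n)$ is bounded in $W^{s,2}(B_R,\C)$ for every $R$, and the classical Rellich--Kondrachov theorem for fractional Sobolev spaces (see \cite[Theorem~7.1]{guia}) gives, for each fixed $R$, a subsequence converging strongly in $L^{\alpha}(B_R,\C)$ for every $\alpha\in[1,2^*_{s})$. A standard diagonal argument over $R=1,2,3,\dots$ then produces a single subsequence of $(u_n)$ converging in $L^{\alpha}_{loc}(\mathbb{R}^N,\C)$ for all $\alpha\in(2,2^*_{s})$, which is the claimed compactness.

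The delicate point is precisely the reduction in the second paragraph: the diamagnetic inequality \eqref{ime_1} only bounds $\||u_n|\|$ and hence the moduli $|u_n|$, which is insufficient to extract a convergent subsequence of the complex-valued functions $u_n$ themselves. The splitting of $u(x)-u(y)e^{i\theta_{x,y}}$, exploiting the local boundedness of $A$ and the fact that $s<1$ makes the remainder kernel locally integrable, is what transfers control from the magnetic seminorm $[\cdot]_{s,A}$ to the ordinary fractional seminorm on bounded sets, and thereby brings the problem into the scope of the usual compactness theorem.
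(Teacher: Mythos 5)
Your proposal is correct, and for the continuous embedding it is exactly the paper's own deduction from \eqref{im_1.0}; the paper states the compact local embedding without proof, treating it as standard, and your second paragraph supplies precisely the standard argument (splitting $u(x)-u(y)e^{i\theta_{x,y}}$ to pass from the magnetic seminorm to the ordinary Gagliardo seminorm on balls, then invoking fractional Rellich--Kondrachov) that the literature the paper relies on uses. No gaps; at most one could remark that a single subsequence working for all $\alpha\in(2,2^*_s)$ simultaneously follows from convergence for one such $\alpha$ together with the uniform $L^{2^*_s}$ bound and interpolation.
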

  
The energy functional $\mathcal{F}_{\lambda}: E \rightarrow \mathbb{R}$ associated with Problem \eqref{problem} is given by 
\begin{equation}
	\mathcal{F}_{\lambda}(u)=\frac{1}{2}\|u\|^2-\frac{1}{2}\int_{\mathbb{R}^{N}}G(|u|^2)\,\mathrm{d}x-\frac{\lambda}{q}\int_{\mathbb{R}^{N}}|u|^q\,\mathrm{d}x.
\end{equation}
In view of $( g_1)$ and $( g_2)$ there exists $C_0 > 0$ such that
\begin{equation}\label{eq1.1}
	|t^2g(t^2)|\leq C_0|t|^{2^*_{s}} \quad \mbox{ and } \quad |t^2g(t^2)|\leq C_0|t|^{p}, \quad \forall\, t \geq 0,
\end{equation}
and by $(g_3)$, there exist $C_1$ and $C_2$ such that
\begin{eqnarray}\label{eq1.2}
	G(t^2)\geq C_1t^{\theta}-C_2, \quad \forall\, t \geq 0.
\end{eqnarray}
Hence, \eqref{eq1.1} and Lemma \ref{lem2.2} imply that $\mathcal{F}_{\lambda}$ is well defined in $E$ if and only if $q = 2^*_{s}$. Thus, we are not able to apply variational methods directly because the functional $\mathcal{F}_\lambda$ is not well defined on $E$ unless $q = 2^*_{s}$. However, to overcome this difficulty, we introduce an adequate modification on the nonlinearity $g(|u|^2)u + \lambda |u|^{q-2}u$, whose approach will be presented in the next section.

                 \section{Auxiliary problems}\label{section_3}\label{3}

In this section, in order to apply minimax methods to obtain a solution for \eqref{problem}, we consider two auxiliary problems. We start by introducing a new nonlinearity. For given $k\in \mathbb{N}$, we define the function $f_{\lambda,k}:\mathbb{R_{+}}\rightarrow \mathbb{R}$ by
\begin{equation}\label{flambdak}
f_{\lambda, k}(t) = \left\{
\begin{array}{ccl}

	g(t)+\lambda t^{\frac{q-2}{2}}, & \mbox{if} & t\leq k,\\
	g(t) +\lambda k^{\frac{q-p}{2}}t^{\frac{p-2}{2}}, & \mbox{if} & t\geq k.
\end{array}
\right.
\end{equation}
By using $(g_1)$ and  $(g_2)$, it is not hard to check that $f_{\lambda,k}$ admits the following properties:
\begin{itemize}
	\item [$(f_1)$] $|f_{\lambda,k}(t)|\leq C_0(1+\lambda k^{\frac{q-p}{2}})t^\frac{p-2}{2}$, \, $\forall \, t \geq 0$;
\item [$(f_2)$] $|f_{\lambda,k}(t)|\leq C_0(1+\lambda k^{\frac{q-p}{2}})t^\frac{2^*_{s}-2}{2}$, \, $\forall \, t \geq 0$.
\end{itemize}
Moreover, denoting $F_{\lambda,k}(t)=\int_{0}^{t}f_{\lambda,k}(\tau)\,\mathrm{d}\tau$, there holds
\begin{equation}\label{Flambdak}
F_{\lambda, k}(t) = \left\{
\begin{array}{ccl}
	G(t)+\frac{2\lambda}{q} t^{\frac{q}{2}}, & \mbox{if} & t\leq k,\\
	G(t) +\frac{2\lambda}{p} k^{\frac{q-p}{2}}t^{\frac{p}{2}} +2\lambda(\frac{1}{p}-\frac{1}{q})k^{\frac{q}{2}}, & \mbox{if} & t\geq k.
\end{array}
\right.
\end{equation}
Now, by condition $(g_3)$ and combining \eqref{flambdak} with \eqref{Flambdak}, a direct computation shows that
\begin{equation}\label{Fflambdak}
0 \leq f_{\lambda,k}(t^2)t^2-\frac{\theta}{2}F_{\lambda, k}(t^2) = \left\{
	\begin{array}{ccl}
	g(t^2)t^2-\frac{\theta}{2}G(t^2)+\Big(\frac{q-\theta}{q}\Big) \lambda t^q, & \mbox{if} & t\leq k,\\
	g(t^2)t^2-\frac{\theta}{2}G(t^2) +\lambda k^{\frac{q-p}{2}}t^p\Big(\frac{p-\theta}{p}\Big) +\theta \lambda k^{\frac{q}{2}}\Big( \frac{q-p}{qp}\Big) , & \mbox{if} & t\geq k.
	\end{array}
	\right.
\end{equation}
Using \eqref{eq1.2} and \eqref{Flambdak}, we deduce
\begin{equation}\label{Fflambdak_1}
	F_{\lambda,k}(t^2)\geq C_1|t|^\theta-C_2, \quad t \geq 0.
\end{equation}

Now, related with $f_{\lambda,k}$, we shall consider the auxiliary problem
\begin{equation}\label{aux1}
	(-\Delta)_{A}^{s}u+V(x)u=f_{\lambda,k}(\vert u\vert^{2})u, \quad \mbox{in } \mathbb{R}^{N}. \tag{$A_{\lambda,k}$}
\end{equation} 
We say that a function $u \in E$ is a weak solution of the problem \eqref{aux1}, if 
\begin{eqnarray*}\label{S_00}
	&&\Re\left(\int \! \! \! \int_{\mathbb{R}^{2N} }\frac{\Big(u(x)-u(y)e^{i A\big(\frac{x+y}{2}\big).(x-y)}\Big)\overline{\Big(\phi(x)-\phi(y)e^{i A\big(\frac{x+y}{2}\big).(x-y)}\Big)}}{|x-y|^{N+2s}}\,\mathrm{d}x\mathrm{d}y\right)\\
	&& +\, \Re\left(\int_{\mathbb{R}^{N}}V(x)u\overline{\phi} \,\mathrm{d}x  
	- \int_{\mathbb{R}^{N}}f_{\lambda,k}(|u|^2)u\overline{\phi}\,\mathrm{d}x \right)=0,\quad \forall\, \phi \in E.
\end{eqnarray*}
Note that if $u$ is a weak solution of the problem \eqref{aux1} and satisfies $|u(x)| \leq  k$ for all $x \in \mathbb{R}^N$, then  $u$ is a weak solution of the problem \eqref{problem}. The energy functional $I_{\lambda,k}:E\rightarrow\mathbb{R}$ associated with Problem \eqref{aux1} is given by
\begin{equation*}
	I_{\lambda,k}(u)=\frac{1}{2}\|u\|^2-\frac{1}{2}\int_{\mathbb{R}^{N}}F_{\lambda,k}(|u|^2)\,\mathrm{d}x.
\end{equation*}
In view of $(f_1)$, $(f_2)$ and Lemma \ref{lem2.2}, the functional $I_{\lambda,k}$ is well defined. Thus, it is easy to see that the weak solutions of \eqref{aux1} correspond to critical points of the energy functional $I_{\lambda,k}$. In order to apply Critical Point Theory we need to recover some compactness. However, we are not able to prove the Palais-Smale condition. In order to overcome this problem we use the penalization method introduced in \cite{delpinofelmer} and adapted in \cite{alvessouto,UB}. For this purpose, we introduce some definitions.
 
We fix $\nu=2\theta/(\theta-2)$, where $\theta$ is given in $(g_3)$ and we define the function $\hat{f}_{\lambda,k}: \mathbb{R}^N\times\mathbb{R_{+}}\rightarrow \mathbb{R}$ by
    \begin{equation}
\hat{f}_{\lambda, k}(x,t) = \left\{
   \begin{array}{rcl}
f_{\lambda,k}(t),& \mbox{if} & \nu f_{\lambda,k}(t)\leq V(x),\\
\dfrac{V(x)}{\nu}, & \mbox{if} & \nu f_{\lambda,k}(t) > V(x).\\
\end{array}
\right.
\end{equation}
Furthermore, considering $R_0 > 1$ given in condition $(V_3 )$, we define
\begin{equation}\label{hlambdak}
h_{\lambda, k}(x,t) = \left\{
\begin{array}{rcl}
	\!f_{\lambda,k}(t),& \mbox{if}& |x|\leq R_0,\\
	\hat{f}_{\lambda,k}(x,t),& \mbox{if}&  |x| > R_0.\\
\end{array}
\right.
\end{equation}
We introduce the second auxiliary problem
\begin{equation}\label{aux2}
	(-\Delta)_{A}^{s}u+V(x)u=h_{\lambda,k}(x,\vert u\vert^{2})u, \quad \mbox{in }\mathbb{R}^{N}, \tag{$B_{\lambda,k}$}
\end{equation}
where, $H_{\lambda,k}(x,t)=\int_{0}^{t}h_{\lambda,k}(x,\tau)\,\mathrm{d}\tau$. A direct computation shows that for all $t \geq 0$, the following inequalities hold:
\begin{eqnarray}
&h_{\lambda,k}(x,t)\leq f_{\lambda,k}(t),&\forall\, x \in \mathbb{R}^N, \label{del_1}\\
&H_{\lambda,k}(x,t)\leq F_{\lambda,k}(t),&\forall\, x \in \mathbb{R}^N,\label{del_2}\\
&\!\!\!\!h_{\lambda,k}(x,t)\leq \dsp\frac{V(x)}{\nu},& \mbox{if}\, |x|>R_0,\label{del_3}\\
&\!H_{\lambda,k}(x,t)\leq \dsp\frac{V(x)}{\nu}t,& \mbox{if}\,|x|>R_0,\label{del_4}\\
&\!\!h_{\lambda,k}(x,t)=f_{\lambda,k}(t),&\mbox{if}\, |x|\leq R_0.\label{del_4.0}
\end{eqnarray}
Moreover, combining \eqref{del_4.0} with \eqref{Fflambdak} we obtain
\begin{equation}\label{Hhlambdak}
	h_{\lambda,k}(x,t^2)t^2-\frac{\theta}{2}H_{\lambda,k}(x,t^2)\geq 0, \quad\forall\, |x|\leq R_0 \mbox{ \ and \ } \forall\, t\geq 0 .
\end{equation}
We say that a function $u \in E$ is a weak solution of the problem \eqref{aux2}, if satisfies
\begin{eqnarray}\label{S_1}
	&&\Re\left(\int \! \! \! \int_{\mathbb{R}^{2N} }\frac{\Big(u(x)-u(y)e^{i A\big(\frac{x+y}{2}\big).(x-y)}\Big)\overline{\Big(\phi(x)-\phi(y)e^{i A\big(\frac{x+y}{2}\big).(x-y)}\Big)}}{|x-y|^{N+2s}}\,\mathrm{d}x\mathrm{d}y\right)\nonumber\\ &&+\, \Re\left(\int_{\mathbb{R}^{N}}V(x)u\overline{\phi} \,\mathrm{d}x  
	- \int_{\mathbb{R}^{N}}h_{\lambda,k}(x,|u|^2)u\overline{\phi}\,\mathrm{d}x \right)=0, \quad \forall\, \phi \in E.
\end{eqnarray}
Note that if $u$ is a weak solution of the problem \eqref{aux2} and satisfies the estimate 
\begin{equation*}
	\nu f(|u(x)|^2) \leq V(x)|u(x)|^2,\quad \forall\,|x|>R_0,
\end{equation*}
then $h_{\lambda,k}(x|u|^2)u=f_{\lambda,k}(|u|^2)u$ and $u$ is indeed a solution of the problem \eqref{aux1}. The Euler–Lagrange functional associated with Problem \eqref{aux2} is given by
\begin{equation*}
	J_{\lambda,k}(u)=\frac{1}{2}\|u\|^2-\frac{1}{2}\int_{\mathbb{R}^{N}}H_{\lambda,k}(x,|u|^2)\,\mathrm{d}x.
\end{equation*}
A direct computation shows that $J_{\lambda,k}\in C^1(E,\mathbb{R})$ with
\begin{eqnarray*}\label{J}
	J'_{\lambda,k}(u)\phi &=&\Re\left(\int \! \! \! \int_{\mathbb{R}^{2N} }\frac{\Big(u(x)-u(y)e^{i A\big(\frac{x+y}{2}\big).(x-y)}\Big)\overline{\Big(\phi(x)-\phi(y)e^{i A\big(\frac{x+y}{2}\big).(x-y)}\Big)}}{|x-y|^{N+2s}}\,\mathrm{d}x\mathrm{d}y \right)\\ 
	&& +\,\Re\left(\int_{\mathbb{R}^{N}}V(x)u\overline{\phi} \,\mathrm{d}x  
	- \int_{\mathbb{R}^{N}}h_{\lambda,k}(x,|u|^2)u\overline{\phi}\,\mathrm{d}x \right), \quad \forall \,u, \phi \in E.
\end{eqnarray*}
Thus the weak solutions of \eqref{aux2} are precisely the critical points of $J_{\lambda,k}$.
\section{Existence of solutions for Problem \eqref{aux2}}\label{4}

	In this section, we will check the mountain pass geometry to the energy functional associated to problem \eqref{aux2}. We recall that a functional $I \in C^{1}(X,\mathbb{R})$ ($X$ is a  Banach space) satisfies the Palais–Smale condition at level $c \in \mathbb{R}$ ($(PS)_c-$condition for short) if any sequence  $(u_n) \subset X$ such that $I(u_n) \rightarrow c \mbox{ \ and \ } I'(u_n) \rightarrow 0$ as $n \rightarrow \infty $, has a convergent subsequence in $X$. A sequence $(u_n) \subset X$ satisfying the previous convergences is called Palais–Smale sequence for $I$ at level $c \in \mathbb{R}$ ($(PS)_c-$sequence for short). Next, we will verify the facts stated above for the energy functional  associated to the problem \eqref{aux2}. First, we show that $J_{\lambda,k}$ has the mountain pass geometry.

\begin{lem}\label{lem4.1} 
	The functional $J_{\lambda,k}$ satisfies the follwing conditions:
\begin{itemize}
\item [$(i)$] $J_{\lambda,k}(0)=0$; 
\item [$(iii)$] there exist $\delta,\rho>0$ such that $J_{\lambda,k}(v)\geq \delta$ if $\|v\|=\rho$;
\item [$(iii)$] there exists $e \in E$ such that $\|e\|>\rho$ and $J_{\lambda,k}(e)<0$.
\end{itemize}
\end{lem}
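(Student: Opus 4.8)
The plan is to verify the three items directly; this is the standard mountain-pass geometry check, and the only real subtlety is that $E$ is \emph{not} continuously embedded in $L^2(\mathbb{R}^N,\C)$ (since $V$ may vanish at infinity), so every superlinear estimate must be routed through the embedding $E\hookrightarrow L^{2^*_s}(\mathbb{R}^N,\C)$ furnished by Lemma~\ref{lem2.2}, i.e. through \eqref{im_1.0}, while the penalized nonlinearity $H_{\lambda,k}$ must be handled using the fact that it agrees with $F_{\lambda,k}$ on $B_{R_0}$. Item $(i)$ is immediate: $\|0\|=0$ and $H_{\lambda,k}(x,0)=0$ for every $x\in\mathbb{R}^N$, so $J_{\lambda,k}(0)=0$.

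For item $(ii)$, I would integrate $(f_2)$ to obtain $F_{\lambda,k}(t)\le C\,t^{2^*_s/2}$ for all $t\ge 0$ (with $C$ depending on the fixed parameters $\lambda$ and $k$), hence $F_{\lambda,k}(|u|^2)\le C|u|^{2^*_s}$; combining this with \eqref{del_2} and then with \eqref{im_1.0} gives
\[
\int_{\mathbb{R}^N}H_{\lambda,k}(x,|u|^2)\,\mathrm{d}x\le\int_{\mathbb{R}^N}F_{\lambda,k}(|u|^2)\,\mathrm{d}x\le C\,\|u\|_{2^*_s}^{2^*_s}\le C'\,\|u\|^{2^*_s}.
\]
Therefore $J_{\lambda,k}(u)\ge\tfrac12\|u\|^2-\tfrac{C'}{2}\|u\|^{2^*_s}=\|u\|^2\big(\tfrac12-\tfrac{C'}{2}\|u\|^{2^*_s-2}\big)$, and since $2^*_s>2$ one fixes $\rho>0$ small enough that $\tfrac{C'}{2}\rho^{2^*_s-2}\le\tfrac14$; then $J_{\lambda,k}(v)\ge\tfrac14\rho^2=:\delta>0$ whenever $\|v\|=\rho$.

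For item $(iii)$, I would fix a nonzero $\varphi\in C_0^{\infty}(\mathbb{R}^N,\C)$ with $\mathrm{supp}\,\varphi\subset B_{R_0}$; note that $\varphi\in E$, because $C_0^{\infty}(\mathbb{R}^N,\C)\subset\mathcal{D}_{A}^{s,2}(\mathbb{R}^N,\C)$ by definition and $V$ is bounded on the compact set $\overline{B_{R_0}}$. Since $\varphi$ vanishes outside $B_{R_0}$ and $H_{\lambda,k}(x,0)=0$, and since $h_{\lambda,k}(x,\cdot)=f_{\lambda,k}(\cdot)$ on $B_{R_0}$ by \eqref{del_4.0} (so that $H_{\lambda,k}(x,\cdot)=F_{\lambda,k}(\cdot)$ there), for every $t>0$ one has, using \eqref{Fflambdak_1},
\[
J_{\lambda,k}(t\varphi)=\frac{t^2}{2}\|\varphi\|^2-\frac12\int_{B_{R_0}}F_{\lambda,k}\big(t^2|\varphi|^2\big)\,\mathrm{d}x\le\frac{t^2}{2}\|\varphi\|^2-\frac{C_1}{2}\,t^{\theta}\int_{B_{R_0}}|\varphi|^{\theta}\,\mathrm{d}x+\frac{C_2}{2}\,|B_{R_0}|.
\]
Because $\theta>2$ and $\int_{B_{R_0}}|\varphi|^{\theta}\,\mathrm{d}x>0$, the right-hand side tends to $-\infty$ as $t\to+\infty$; choosing $t_0$ so large that $\|t_0\varphi\|=t_0\|\varphi\|>\rho$ and $J_{\lambda,k}(t_0\varphi)<0$, and setting $e:=t_0\varphi$, concludes the argument.

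I do not expect a genuine obstacle: every step is routine once the two points above are respected. Concretely, in $(ii)$ one must avoid any $L^2$ bound and estimate entirely in $L^{2^*_s}$ via \eqref{im_1.0}; and in $(iii)$ one must localize the test function inside $B_{R_0}$ (or, for a general $\varphi\in E$, split $\int_{\mathbb{R}^N}$ and discard the nonnegative integral over $\{|x|>R_0\}$, using also \eqref{del_4}) so that the penalization does not cancel the superquadratic growth coming from $(g_3)$ and recorded in \eqref{Fflambdak_1}.
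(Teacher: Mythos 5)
Your proposal is correct and follows essentially the same route as the paper: item $(ii)$ via the $L^{2^*_s}$ bound on $F_{\lambda,k}$ from $(f_2)$ together with \eqref{del_2} and \eqref{im_1.0}, and item $(iii)$ via a compactly supported test function inside the region where $h_{\lambda,k}=f_{\lambda,k}$, using \eqref{Fflambdak_1} and $\theta>2$. The only (immaterial) difference is that the paper takes $\mathrm{supp}\,\varphi\subset B_1(0)$ while you take it in $B_{R_0}$; both lie where \eqref{del_4.0} applies, so the argument is identical.
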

\begin{proof}
It follows directly from the definition of $J_{\lambda,k}$, that $(i)$ holds. In order to prove $(ii)$, we note
that in view of $(f_2)$, \eqref{del_2} and Sobolev continuous embedding from $E$ into $L^{2^*_{s}}(\mathbb{R}^N,\mathbb{C})$, we see that
\begin{eqnarray*}
J_{\lambda,k}(u) &\geq& \frac{1}{2}\|u\|^2-\frac{2C_0}{2^*_{s}}\big(1+\lambda k^{\frac{q-p}{2}}\big)\frac{1}{2}\int_{\mathbb{R}^N}|u|^{2^*_{s}}\,\mathrm{d}x\nonumber\\
&\geq& \frac{1}{2}\|u\|^2-C\|u\|^{2^*_{s}},
\end{eqnarray*}	
where $C:=C(\lambda,k)>0$. Therefore, since that  $2^*_{s}>2$, we choose $\rho$ small enough such that $J_{\lambda,k}(u)\geq \delta >0$ for all $u$ in $E$ with $\|u\|=\rho$, that is, $(ii)$ holds.

  in order to prove $(iii)$, fix $\varphi \in C^{\infty}_0(\mathbb{R}^N)\backslash\{0 \}$ with $supp(\varphi) \subset B_1(0)$ and note that \eqref{del_4.0} implies 
  	\begin{equation*}
  		H_{\lambda,k}(x,|\varphi|^2)=F_{\lambda,k}(|\varphi|^2), \quad \mbox{if \ } x \in supp(\varphi).
  			\end{equation*}
Thus, using \eqref{Fflambdak_1} we obtain
\begin{eqnarray*}
	J_{\lambda,k}(t\varphi)&=&\frac{t^2}{2}\|\varphi\|^2-\frac{1}{2}\int_{supp(\varphi)}F_{\lambda,k}(|t\varphi|^2)\,\mathrm{d}x\nonumber\\
	&\leq& \frac{t^2}{2}\|\varphi\|^2 - \frac{C_1}{2}t^\theta\int_{supp(\varphi)}|\varphi|^\theta \,\mathrm{d}x+\frac{C_2}{2}|supp(\varphi)|,
\end{eqnarray*}
which implies that $J_{\lambda,k}(t\varphi) \rightarrow -\infty$ as $t \rightarrow \infty$, since $\theta >2$. Finally, assertion $(iii)$ follows for $e= t\varphi$ with $t$ large enough. 
\end{proof}	
Applying a version of the Mountain Pass Theorem without the $(PS)$ condition, \cite{w}, we obtain a Palais–Smale sequence $(u_n) \subset E$ such that
\begin{equation}\label{PSc}
	J_{\lambda,k}(u_n) \rightarrow c_{\lambda,k} \mbox{ \ and \ } J'_{\lambda,k}(u_n) \rightarrow 0,
\end{equation}
where $c_{\lambda,k}$ is the mountain pass level characterized by
\begin{equation*}
	0<c_{\lambda,k}:=\inf_{\gamma \in \Gamma_{\lambda,k}}\max_{t \in [0,1]}J_{\lambda,k}(\gamma(t))
\end{equation*}
where
\begin{equation*}
	\Gamma_{\lambda,k}:=\Big\{ \gamma \in C([0, 1], E) : \gamma(0) = 0 \mbox{ \ and \ } J_{\lambda,k}(\gamma(1)) < 0 \Big\}.
\end{equation*}
Now, we introduce the functional $I_0: E\rightarrow \mathbb{R}$ given by 
	\begin{equation*}
		I_0(u)=\frac{1}{2}\int \! \! \! \int_{\mathbb{R}^{2N}}\frac{|(u(x)-u(y)e^{i A\big(\frac{x+y}{2}\big).(x-y)})|^2}{|x-y|^{N+2s}}\,\mathrm{d}x\mathrm{d}y +\frac{1}{2}\int_{\mathbb{R}^N}V(x)|u|^2\,\mathrm{d}x-\frac{1}{2}\int_{\mathbb{R}^N}G(|u|^2)\,\mathrm{d}x.
	\end{equation*}
	This functional satisfies the conditions $(i)$, $(ii)$ and $(iii)$ of Lemma \ref{lem4.1}. Hence, 
\begin{equation}\label{minimax_1}
c_0:=\inf_{\gamma\in \Gamma_0}\max_{t \in [0,1]} I_0(\gamma(t))
\end{equation}
where
\begin{equation*}
	\Gamma_0=\{\gamma \in C([0,1], E): \gamma(0)=0 \mbox{ \ and \ } I_0(\gamma(1)) < 0\}.
\end{equation*}
In view of the definition of $f_{\lambda,k}$ in \eqref{flambdak} and $h_{\lambda,k}$ in \eqref{hlambdak}, we have $J_{\lambda,k}(u) \leq I_0(u)$ for all $u \in E$. Hence, from definition of the levels $c_{\lambda,k}$ and $c_0$, we obtain $c_{\lambda,k}\leq c_0$. It is important to point out that the level $c_0$ does not depend on $\lambda$, $k$, and $R_0$.
\begin{lem}\label{lemma4.2}
If $(u_n)$ is a $(PS)_{c_{\lambda,k}}-$sequence for $J_{\lambda,k}$, then it is bounded in $E$.
\end{lem}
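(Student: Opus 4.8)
The plan is to use the Ambrosetti--Rabinowitz-type condition encoded in \eqref{Hhlambdak} together with the control \eqref{del_4} on the penalized region $|x|>R_0$. Let $(u_n)$ be a $(PS)_{c_{\lambda,k}}$-sequence, so that $J_{\lambda,k}(u_n)\to c_{\lambda,k}$ and $J'_{\lambda,k}(u_n)\to 0$ in $E^{\ast}$; in particular $J'_{\lambda,k}(u_n)u_n = o_n(1)\|u_n\|$. The first step is to form the standard combination
\[
J_{\lambda,k}(u_n)-\frac{1}{\theta}J'_{\lambda,k}(u_n)u_n
=\left(\frac12-\frac1\theta\right)\|u_n\|^2
+\int_{\mathbb{R}^N}\left(\frac1\theta h_{\lambda,k}(x,|u_n|^2)|u_n|^2-\frac12 H_{\lambda,k}(x,|u_n|^2)\right)\mathrm{d}x.
\]
Here I would rewrite the integrand using $\frac1\theta h_{\lambda,k}(x,t)t-\frac12 H_{\lambda,k}(x,t)=\frac1\theta\bigl(h_{\lambda,k}(x,t)t-\frac\theta2 H_{\lambda,k}(x,t)\bigr)$.

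Next I would split the domain into $\{|x|\le R_0\}$ and $\{|x|>R_0\}$. On $\{|x|\le R_0\}$, \eqref{Hhlambdak} (with $t^2=|u_n|^2$) gives $h_{\lambda,k}(x,|u_n|^2)|u_n|^2-\frac\theta2 H_{\lambda,k}(x,|u_n|^2)\ge 0$, so that part of the integral is nonnegative and can simply be dropped. On $\{|x|>R_0\}$ I would use \eqref{del_4}, namely $H_{\lambda,k}(x,|u_n|^2)\le \frac{V(x)}{\nu}|u_n|^2$, to bound
\[
-\frac12\int_{\{|x|>R_0\}}H_{\lambda,k}(x,|u_n|^2)\,\mathrm{d}x
\ge -\frac{1}{2\nu}\int_{\mathbb{R}^N}V(x)|u_n|^2\,\mathrm{d}x
\ge -\frac{1}{2\nu}\|u_n\|^2,
\]
while $\frac1\theta\int_{\{|x|>R_0\}}h_{\lambda,k}(x,|u_n|^2)|u_n|^2\,\mathrm{d}x\ge 0$ is discarded. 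Recalling $\nu=2\theta/(\theta-2)$, so that $\frac1{2\nu}=\frac{\theta-2}{4\theta}=\frac12\left(\frac12-\frac1\theta\right)$, combining the two pieces yields
\[
c_{\lambda,k}+o_n(1)\|u_n\|+o_n(1)\ \ge\ \left(\frac12-\frac1\theta\right)\|u_n\|^2-\frac1{2\nu}\|u_n\|^2
=\frac12\left(\frac12-\frac1\theta\right)\|u_n\|^2.
\]

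Since $\theta>2$ the coefficient $\frac12\left(\frac12-\frac1\theta\right)$ is strictly positive, so the inequality $\frac12\left(\frac12-\frac1\theta\right)\|u_n\|^2\le c_{\lambda,k}+o_n(1)\|u_n\|+o_n(1)$ forces $(\|u_n\|)$ to be bounded, which is the claim. The only delicate point is bookkeeping: one must make sure that the penalized term on $|x|>R_0$ is absorbed by a fixed \emph{fraction} of the quadratic term (here exactly half, by the choice of $\nu$), leaving a strictly positive multiple of $\|u_n\|^2$ on the left; everything else is the routine Ambrosetti--Rabinowitz boundedness argument. Note also that $H_{\lambda,k}(x,0)=0$ guarantees the integrals are well defined and the splitting is legitimate.
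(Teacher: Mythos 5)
Your proposal is correct and follows essentially the same route as the paper: the combination $J_{\lambda,k}(u_n)-\tfrac{1}{\theta}J'_{\lambda,k}(u_n)u_n$, dropping the nonnegative contribution on $B_{R_0}$ via \eqref{Hhlambdak}, controlling the penalized region with \eqref{del_4}, and absorbing $\tfrac{1}{2\nu}\|u_n\|^2$ into $\bigl(\tfrac12-\tfrac1\theta\bigr)\|u_n\|^2$ to land on the coefficient $\tfrac{\theta-2}{4\theta}$. (Incidentally, you use the correct factor $\tfrac{1}{\theta}$ in front of $J'_{\lambda,k}(u_n)u_n$; the paper's displayed $\tfrac{\theta}{2}$ is a typo, as its own right-hand side shows.)
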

\begin{proof}
	Let $(u_n) \subset E$ be a $(PS)_{c_{\lambda,k}}-$sequence. Then, using \eqref{Hhlambdak}, $h_{\lambda,k} \geq 0$ and \eqref{del_4}, respectively, we reach 
\begin{eqnarray}\label{stimative_1}
	J_{\lambda,k}(u_n)-\frac{\theta}{2}J'_{\lambda,k}(u_n)u_n&=&\Big(\frac{1}{2}-\frac{1}{\theta}\Big)\|u_n\|^2+\frac{1}{\theta}\int_{\mathbb{R}^{N}}\Big[h_{\lambda,k}(x,|u_n|^2)|u_n|^2-\frac{\theta}{2} H_{\lambda,k}(x,|u_n|^2)\Big]\,\mathrm{d}x \nonumber\\
	&\geq & \Big(\frac{1}{2}-\frac{1}{\theta}\Big)\|u_n\|^2+\frac{1}{\theta}\int_{ B_{R_0}^{c}(0)}\Big[h_{\lambda,k}(x,|u_n|^2)|u_n|^2-\frac{\theta}{2} H_{\lambda,k}(x,|u_n|^2)\Big]\,\mathrm{d}x\nonumber\\
	&\geq & \Big(\frac{1}{2}-\frac{1}{\theta}\Big)\|u_n\|^2-\frac{1}{2} \int_{ B_{R_0}^{c}(0)}H_{\lambda,k}(x,|u_n|^2)\,\mathrm{d}x\nonumber\\
&\geq& \Big(\frac{1}{2}-\frac{1}{\theta}\Big)\|u_n\|^2-\frac{1}{2\nu} \int_{ B^c_{R_0}(0)}V(x)|u_n|^2\,\mathrm{d}x\nonumber\\
	&\geq& \frac{\theta -2}{4\theta}\|u_n\|^2.
\end{eqnarray}
From this, there exist $C_1$, $C_2 > 0$ such that
\begin{equation*}
	C_1+C_2 \|u_n\| \geq \frac{\theta-2}{4\theta}\|u_n\|^2, \quad \forall n \in \mathbb{N},
\end{equation*}
which finishes the proof.
\end{proof}
\begin{lem}\label{lemmaPS}
The $(PS)_{c_{\lambda,k}}-$sequence satisfies the following property:	For each $\varepsilon >0$ there exists $r=r(\varepsilon)>R_0$ verifying
\begin{equation}\label{PS}
	\limsup_{n \rightarrow +\infty} \int_{B^c_{r}(0)} \! \! \! \! \ \int_{\mathbb{R}^{N} }\frac{\big|u_n(x)-u_n(y)e^{i A\big(\frac{x+y}{2}\big).(x-y)}\big|^2}{|x-y|^{N+2s}}\,\mathrm{d}x\mathrm{d}y +\int_{B^c_{r}(0)}V(x)|u_n|^2\,\mathrm{d}x < \varepsilon.
\end{equation}
\end{lem}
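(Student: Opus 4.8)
The plan is to test the relation $J'_{\lambda,k}(u_n)=o_n(1)$ in $E^{*}$ against the function $\eta_r u_n$, where $\eta_r\in C^{\infty}(\mathbb{R}^N)$ is a cut-off with $0\le\eta_r\le1$, $\eta_r\equiv0$ on $B_{r/2}$, $\eta_r\equiv1$ on $B_r^{c}$ and $|\nabla\eta_r|\le C/r$, and $r>2R_0$ will eventually be taken large. Since $(u_n)$ is bounded in $E$ by Lemma \ref{lemma4.2}, one first checks that $\|\eta_r u_n\|$ is bounded, uniformly in $n$ for each fixed $r$: the potential part is $\le\int_{\mathbb{R}^N}V|u_n|^2\le\|u_n\|^2$, and for the magnetic Gagliardo part one uses the elementary identity
\[
\eta_r(x)u_n(x)-\eta_r(y)u_n(y)e^{iA(\frac{x+y}{2})\cdot(x-y)}=\eta_r(x)\Big(u_n(x)-u_n(y)e^{iA(\frac{x+y}{2})\cdot(x-y)}\Big)+\big(\eta_r(x)-\eta_r(y)\big)u_n(y)e^{iA(\frac{x+y}{2})\cdot(x-y)}
\]
together with $|a+b|^2\le 2|a|^2+2|b|^2$. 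In particular $J'_{\lambda,k}(u_n)(\eta_r u_n)=o_n(1)$.

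Expanding $J'_{\lambda,k}(u_n)(\eta_r u_n)$ with the same identity, the real part of the bilinear kernel term splits into $\iint_{\mathbb{R}^{2N}}\eta_r(x)\,\big|u_n(x)-u_n(y)e^{iA(\frac{x+y}{2})\cdot(x-y)}\big|^2|x-y|^{-N-2s}\,\mathrm{d}x\mathrm{d}y$, which dominates $\int_{B_r^{c}}\!\int_{\mathbb{R}^N}(\cdots)$ because $\eta_r\equiv1$ on $B_r^{c}$, plus a mixing term $T_r$ carrying the factor $(\eta_r(x)-\eta_r(y))u_n(y)$. Adding $\int_{\mathbb{R}^N}V\eta_r|u_n|^2\ge\int_{B_r^{c}}V|u_n|^2$ and observing that $\operatorname{supp}\eta_r\subset\{|x|\ge r/2\}\subset\{|x|>R_0\}$, where $h_{\lambda,k}(x,t)\le V(x)/\nu$ by \eqref{del_3}, we absorb $\int_{\mathbb{R}^N}h_{\lambda,k}(x,|u_n|^2)|u_n|^2\eta_r\le\frac1\nu\int_{\mathbb{R}^N}V\eta_r|u_n|^2$ into the left-hand side (recall $\nu=2\theta/(\theta-2)>1$). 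This yields
\[
\Big(1-\tfrac1\nu\Big)\!\left[\int_{B_r^{c}}\!\int_{\mathbb{R}^N}\frac{\big|u_n(x)-u_n(y)e^{iA(\frac{x+y}{2})\cdot(x-y)}\big|^2}{|x-y|^{N+2s}}\,\mathrm{d}x\mathrm{d}y+\int_{B_r^{c}}V|u_n|^2\right]\le o_n(1)+|T_r|.
\]

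It remains to estimate $|T_r|$. By the Cauchy--Schwarz inequality in $L^2(\mathbb{R}^{2N},|x-y|^{-N-2s}\mathrm{d}x\mathrm{d}y)$,
\[
|T_r|\le[u_n]_{s,A}\left(\iint_{\mathbb{R}^{2N}}\frac{|\eta_r(x)-\eta_r(y)|^2|u_n(y)|^2}{|x-y|^{N+2s}}\,\mathrm{d}x\mathrm{d}y\right)^{1/2}=:[u_n]_{s,A}\,\mathcal{E}_r^{1/2},
\]
and $[u_n]_{s,A}\le\|u_n\|\le C$. Writing $\mathcal{E}_r=\int_{\mathbb{R}^N}|u_n(y)|^2K_r(y)\,\mathrm{d}y$ with $K_r(y):=\int_{\mathbb{R}^N}|\eta_r(x)-\eta_r(y)|^2|x-y|^{-N-2s}\,\mathrm{d}x$, the standard fractional cut-off estimate (split $|x-y|\lessgtr r$, use $|\nabla\eta_r|\le C/r$ and $0\le\eta_r\le1$, noting $2-2s>0$) gives $K_r(y)\le Cr^{-2s}$ for every $y$, while the fact that $\eta_r\equiv1$ outside $B_r$ gives the sharper bound $K_r(y)\le Cr^{N}|y|^{-N-2s}$ for $|y|\ge 2r$; combining the two, $K_r(y)\le Cr^{2s}|y|^{-4s}$ for all $y\neq0$. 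Splitting $\mathcal{E}_r$ over $B_{2R_0}$ and its complement: on $B_{2R_0}$ use $K_r\le Cr^{-2s}$, Hölder's inequality and the continuous embedding $E\hookrightarrow L^{2^{*}_{s}}$ to get $\int_{B_{2R_0}}|u_n|^2K_r\le Cr^{-2s}|B_{2R_0}|^{2s/N}\|u_n\|_{2^{*}_{s}}^2\le C_1r^{-2s}$; on $B_{2R_0}^{c}$ invoke $(V_3)$ in the form $|y|^{-4s}\le V(y)/(\Lambda R_0^{4s})$ (valid since $|y|>R_0$) to get $\int_{B_{2R_0}^{c}}|u_n|^2K_r\le Cr^{2s}\!\int_{B_{2R_0}^{c}}|y|^{-4s}|u_n|^2\le \frac{Cr^{2s}}{\Lambda}\int_{B_{2R_0}^{c}}V|u_n|^2\le\frac{C_2r^{2s}}{\Lambda}$. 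Hence $\mathcal{E}_r\le C_1r^{-2s}+C_2r^{2s}/\Lambda$, which is as small as we wish once $r$ is large (and $\Lambda$ large, in accordance with the hypothesis $\Lambda\ge\Lambda_0$ of Theorem \ref{theorem}). Feeding this into the displayed inequality and passing to $\limsup_{n\to\infty}$ proves \eqref{PS}.

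The main obstacle is the mixing term $T_r$. Unlike the local case, where the analogous term $\int u_n\nabla u_n\cdot\nabla\eta_r$ is supported in a shrinking-weight annulus, here $T_r$ is genuinely nonlocal, and since $V$ vanishes at infinity one has no $L^2$-bound on $u_n$ in the far field; the decay condition $(V_3)$ is precisely what supplies the missing control for $|y|\gtrsim R_0$, while the Sobolev embedding controls the (fixed, $r$-independent) region near the origin. A secondary point that must be handled carefully is that every estimate on $T_r$ and on $\|\eta_r u_n\|$ is uniform in $n$ for fixed $r$ — which it is, because only $\|u_n\|\le C$ enters — so that taking $\limsup_n$ and only afterwards $r\to\infty$ is legitimate.
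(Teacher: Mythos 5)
Your construction is the paper's: the same cut-off $\eta_r$, the same test function $\eta_r u_n$, the same splitting of the bilinear kernel into the $\eta_r(x)$-weighted Gagliardo term plus a mixing term, the same Cauchy--Schwarz bound for the mixing term, and the same absorption of $\int h_{\lambda,k}(x,|u_n|^2)|u_n|^2\eta_r$ via \eqref{del_3}. This brings you exactly to the paper's inequality \eqref{PS_5}. At that point the paper invokes \cite[Lemma 2.4]{ambro3} to assert \eqref{PSS}, i.e.\ that $\mathcal{E}_r:=\iint_{\mathbb{R}^{2N}}|u_n(y)|^2|\eta_r(x)-\eta_r(y)|^2|x-y|^{-N-2s}\,\mathrm{d}x\mathrm{d}y$ satisfies $\lim_{r\to\infty}\limsup_n\mathcal{E}_r=0$, whereas you attempt to prove this directly. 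Your pointwise kernel bounds $K_r(y)\leq Cr^{-2s}$ and $K_r(y)\leq Cr^{N}|y|^{-N-2s}$ for $|y|\geq 2r$ (hence $K_r(y)\leq Cr^{2s}|y|^{-4s}$) are correct, as is the near-origin estimate $\int_{B_{2R_0}}|u_n|^2K_r\leq C_1r^{-2s}$ via H\"older and the $L^{2^*_s}$-bound.

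The gap is in the far-field term. Your bound is $\int_{B_{2R_0}^c}|u_n|^2K_r\leq C_2r^{2s}/\Lambda$, which \emph{increases} as $r\to\infty$; hence $\mathcal{E}_r\leq C_1r^{-2s}+C_2r^{2s}/\Lambda$ does not become small by taking $r$ large, and the sentence ``which is as small as we wish once $r$ is large'' is false as written. The parenthetical appeal to $\Lambda\geq\Lambda_0$ does not repair this: $\Lambda$ is a fixed constant attached to $V$ via $(V_3)$, the present lemma (and the $(PS)$ condition it feeds, hence Lemma \ref{lemma4.4}) is asserted for \emph{every} $\lambda>0$ and $k$ without any largeness assumption on $\Lambda$, and in any case one would need $C_2r(\varepsilon)^{2s}/\Lambda<\varepsilon$ with $r(\varepsilon)\to\infty$ as $\varepsilon\to0$, i.e.\ $\Lambda=+\infty$. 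The underlying difficulty is precisely the one you identify in your closing paragraph: since $V$ vanishes at infinity, $(u_n)$ carries no uniform global $L^2$-bound, so the standard argument of \cite[Lemma 2.4]{ambro3} (where $V$ is bounded below and $K_r\leq Cr^{-2s}$ paired with $\|u_n\|_2^2\leq C$ immediately gives $\mathcal{E}_r\leq Cr^{-2s}$) is not available, and trading $|y|^{-4s}\leq V(y)/(\Lambda R_0^{4s})$ for the weight $V$ only returns an $r$-independent constant before the extra factor $r^{2s}$ is multiplied in. To close the argument you would need genuinely more than the pointwise bounds on $K_r$ --- e.g.\ some tightness information on $|u_n|^{2^*_s}$ or on $V|u_n|^2$ in the transition annulus $r/2\leq|y|\leq 2r$ --- none of which is supplied.
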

\begin{proof}
Fix $r > 2R_0$ and set $\eta_{r}\in C^{\infty}(\mathbb{R}^N,\mathbb{R})$ such that $0\leq \eta_r \leq 1$, $\eta_r=0$ in $B_{\frac{r}{2}}(0)$, $\eta_r=1$ in $B^c_r(0)$ and $|\nabla \eta_r(x)|\leq \frac{C}{r}$, for some $C > 0$ independent of $r$. Since $J'_{\lambda,k}(u_n).\eta_ru_n=o_n(1)$, it follows from \eqref{del_3} that 
\begin{eqnarray}\label{PS_1}
& &\Re\left(\int\!\!\! \int_{\mathbb{R}^{2N}}\frac{\Big(u_n(x)-u_n(y)e^{i A\big(\frac{x+y}{2}\big).(x-y)}\Big)\overline{\Big(u_n(x)\eta_r(x)-u_n(y)\eta_r(y)e^{i A\big(\frac{x+y}{2}\big).(x-y)}\Big)}}{|x-y|^{N+2s}}\,\mathrm{d}x\mathrm{d}y\right) \nonumber\\
& &+\int_{\mathbb{R}^{N}}V(x)|u_n|^2\eta_r \,\mathrm{d}x  =\int_{\mathbb{R}^{N}}h_{\lambda,k}(x,|u_n|^2)|u_n|^2\eta_r \,\mathrm{d}x+o_n(1)\nonumber\\
&\leq& \frac{1}{\nu}\int_{\mathbb{R}^N}V(x)|u|^2\eta_r \,\mathrm{d}x + o_n(1).
\end{eqnarray}
Next, using $\overline{z_1+z_2}=\overline{z_1}+\overline{z_2}$ for all $z_1,z_2 \in \C$ and $\overline{e^{i t}}=e^{-i t}$ for all $t \in \mathbb{R}$, let us note that
\begin{eqnarray}\label{PS_2}	
&&\!\!\!\!\Re\Bigg(\Big(u_n(x)-u_n(y)e^{i A\big(\frac{x+y}{2}\big).(x-y)}\Big)\overline{\Big(u_n(x)\eta_r(x)-u_n(y)\eta_r(y)e^{i A\big(\frac{x+y}{2}\big).(x-y)}\Big)}\Bigg)\nonumber\\
&=&\!\!\!\!\! \Re \Bigg(\Big(u_n(x)-u_n(y)e^{i A\big(\frac{x+y}{2}\big).(x-y)}\Big)\overline{\Big(u_n(x)\eta_r(x)-u_n(y)\eta_r(x)e^{i A\big(\frac{x+y}{2}\big).(x-y)}\Big)}\nonumber\\
& & + \Big(u_n(x)-u_n(y)e^{iA\big(\frac{x+y}{2}\big).(x-y)}\Big)\overline{\Big(u_n(y)\eta_r(x)-u_n(y)\eta_r(y)e^{i A\big(\frac{x+y}{2}\big).(x-y)}\Big)}\Bigg)\nonumber\\
&=&\!\!\!\!\!\Re\Bigg(\overline{u_n(y)}e^{-i A\big(\frac{x+y}{2}\big).(x-y)}\Big(u_n(x)-u_n(y)e^{i A\big(\frac{x+y}{2}\big).(x-y)}\Big)\Big(\eta_r(x)-\eta_r(y)\Big)\Bigg)\nonumber\\
&  &+\eta_r(x)\Big|u_n(x)-u_n(y)e^{i A\big(\frac{x+y}{2}\big).(x-y)}\Big|^2.
\end{eqnarray}
Now, since $|\Re(z)|\leq |z|$ for all $z \in \C$, $|e^{it}|=1$ for all $t \in \mathbb{R}$  and $(u_n)$ is bounded in $E$, the H\"{o}lder’s inequality leads to
\begin{equation*}
	\frac{\overline{u_n(y)}e^{-i A\big(\frac{x+y}{2}\big).(x-y)}\Big(u_n(x)-u_n(y)e^{i A\big(\frac{x+y}{2}\big).(x-y)}\Big)\Big(\eta_r(x)-\eta_r(y)\Big)}{|x-y|^{N+2s}} \in L^1(\mathbb{R}^{2N},\C)
\end{equation*}
and 
\begin{eqnarray}\label{PS_3}
&&\Bigg|\Re\left(\int\!\!\!	\int_{\mathbb{R}^N}\frac{\overline{u_n(y)}e^{-i A\big(\frac{x+y}{2}\big).(x-y)}\Big(u_n(x)-u_n(y)e^{i A\big(\frac{x+y}{2}\big).(x-y)}\Big)\Big(\eta_r(x)-\eta_r(y)\Big)}{|x-y|^{N+2s}}\,\mathrm{d}x\mathrm{d}y\right)\Bigg|\nonumber\\
&\leq&\left(\int\!\!\!\int_{\mathbb{R}^{2N}}\frac{\big|u_n(x)-u_n(y)e^{i A\big(\frac{x+y}{2}\big).(x-y)}\big|^2}{|x-y|^{N+2s}}\,\mathrm{d}x\mathrm{d}y\right)^\frac{1}{2} \left(\int\!\!\!\int_{\mathbb{R}^{2N}}|\overline{u_n(y)}|^2\frac{\big|\eta_r(x)-\eta_r(y)\big|^2}{|x-y|^{N+2s}}\,\mathrm{d}x\mathrm{d}y \right)^{\frac{1}{2}}\nonumber\\
&\leq& C\left(\int\!\!\!\int_{\mathbb{R}^{2N}}|u_n(y)|^2\frac{\big|\eta_r(x)-\eta_r(y)\big|^2}{|x-y|^{N+2s}}\,\mathrm{d}x\mathrm{d}y \right)^{\frac{1}{2}}.
\end{eqnarray}
By combining \eqref{PS_1}, \eqref{PS_2} and \eqref{PS_3}, we write
\begin{eqnarray}\label{PS_5}
&&\int\!\!\!\int_{\mathbb{R}^{2N}}\frac{\eta_r(x)\big|u_n(x)-u_n(y)e^{i A\big(\frac{x+y}{2}\big).(x-y)}\big|^2}{|x-y|^{N+2s}}\,\mathrm{d}x\mathrm{d}y +\left(1-\frac{1}{\nu}\right)\int_{\mathbb{R}^{N}}V(x)|u_n|^2\eta_r \,\mathrm{d}x \nonumber\\
&\leq&-\Re\left(\int\!\!\!	\int_{\mathbb{R}^N}\frac{\overline{u_n(y)}e^{-i A\big(\frac{x+y}{2}\big).(x-y)}\Big(u_n(x)-u_n(y)e^{i A\big(\frac{x+y}{2}\big).(x-y)}\Big)\Big(\eta_r(x)-\eta_r(y)\Big)}{|x-y|^{N+2s}}\,\mathrm{d}x\mathrm{d}y \right)\nonumber\\
&\leq& C\left(\int\!\!\!\int_{\mathbb{R}^{2N}}|u_n(y)|^2\frac{\big|\eta_r(x)-\eta_r(y)\big|^2}{|x-y|^{N+2s}}\,\mathrm{d}x\mathrm{d}y\right)^{\frac{1}{2}} + o_n(1).
\end{eqnarray}
Similarly to \cite[Lemma 2.4]{ambro3} one may prove that 
\begin{eqnarray}\label{PSS}
	\lim_{r \rightarrow \infty}\limsup_{n \rightarrow \infty}\int\!\!\!\int_{\mathbb{R}^{2N}}|u_n(y)|^2\frac{\big|\eta_r(x)-\eta_r(y)\big|^2}{|x-y|^{N+2s}}\,\mathrm{d}x\mathrm{d}y=0.
\end{eqnarray}
Therefore, it follows from \eqref{PS_5} and \eqref{PSS} that \eqref{PS} holds. 
\end{proof}	
   In the next lemma, we prove the Palais–Smale condition for the functional $J_{\lambda,k}$. 

\begin{lem}\label{lemma4.3}
The functional $J_{\lambda,k}$ satisfies the 
$(PS)_{c_{\lambda,k}}-$condition. 
\end{lem}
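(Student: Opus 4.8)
The plan is to show that the bounded Palais–Smale sequence $(u_n)$ obtained in Lemma~\ref{lemma4.2} admits a strongly convergent subsequence in $E$. First I would use boundedness of $(u_n)$ in the Hilbert space $E$ to extract a subsequence, still denoted $(u_n)$, with $u_n \rightharpoonup u$ weakly in $E$, $u_n \to u$ strongly in $L^\alpha_{loc}(\mathbb{R}^N,\C)$ for every $\alpha \in (2,2^*_s)$ (by the compact embedding in Lemma~\ref{lem2.2}), and $u_n(x) \to u(x)$ a.e.\ in $\mathbb{R}^N$. The candidate limit $u$ is a critical point of $J_{\lambda,k}$: passing to the limit in $J'_{\lambda,k}(u_n)\phi = o_n(1)$ for fixed $\phi \in C_0^\infty(\mathbb{R}^N,\C)$ is justified by the local compactness together with the growth bounds $(f_1)$, $(f_2)$ (which dominate $h_{\lambda,k}$ via \eqref{del_1}) and a Vitali/Brezis–Lieb type argument on compact sets, so $J'_{\lambda,k}(u)=0$.

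The core of the proof is to upgrade weak convergence to strong convergence, i.e.\ $\|u_n - u\| \to 0$. I would test $J'_{\lambda,k}(u_n) - J'_{\lambda,k}(u)$ against $u_n - u$; since $J'_{\lambda,k}(u_n) \to 0$ and $u_n - u$ is bounded, and $J'_{\lambda,k}(u)(u_n-u) \to 0$ by weak convergence, this yields
\begin{equation*}
\|u_n-u\|^2 = \Re\int_{\mathbb{R}^N}\Big(h_{\lambda,k}(x,|u_n|^2)u_n - h_{\lambda,k}(x,|u|^2)u\Big)\overline{(u_n-u)}\,\mathrm{d}x + o_n(1).
\end{equation*}
It remains to show the nonlinear term on the right tends to zero. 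I would split $\mathbb{R}^N = B_r(0) \cup B_r^c(0)$. On the ball $B_r(0)$, the local strong convergence in $L^\alpha$ for $\alpha$ between $2$ and $2^*_s$, combined with the subcritical growth $(f_1)$ and Hölder's inequality, gives that $\int_{B_r(0)}(\cdots)\to 0$ for each fixed $r$. On the exterior region $B_r^c(0)$, I would use Lemma~\ref{lemmaPS}: for $\varepsilon>0$ choose $r=r(\varepsilon)>R_0$ so that the tail of $\|u_n\|$ (the seminorm plus weighted $L^2$ part restricted to $B_r^c(0)$) is uniformly small, hence $\|u_n\|_{L^{2^*_s}(B_r^c(0))}$ is uniformly small by \eqref{im_1.0}; then the penalization bound \eqref{del_3}, namely $h_{\lambda,k}(x,t)\le V(x)/\nu$ for $|x|>R_0$, controls $\int_{B_r^c(0)} h_{\lambda,k}(x,|u_n|^2)|u_n||u_n-u|\,\mathrm{d}x$ by a constant times $\int_{B_r^c(0)}V(x)|u_n|^2\,\mathrm{d}x$, which is $<\varepsilon$ uniformly in $n$; the corresponding term with $u$ in place of $u_n$ is handled the same way using Fatou and $u \in E$. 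Letting $n\to\infty$ then $\varepsilon \to 0$ gives $\|u_n-u\|\to 0$.

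The main obstacle I anticipate is the exterior estimate: one must carefully bound the mixed seminorm term $\Re\iint (u_n(x)-u_n(y)e^{iA\cdots})\overline{((u_n-u)(x)-(u_n-u)(y)e^{iA\cdots})}/|x-y|^{N+2s}$ restricted away from the origin, rather than just the potential term, and show it is small. The splitting-of-the-conjugate identity used in the proof of Lemma~\ref{lemmaPS} (display \eqref{PS_2}), together with the tail smallness from Lemma~\ref{lemmaPS} and a diamagnetic/Hölder argument, is exactly what makes this work; one also needs to verify the nonlinear integrand with argument $|u_n|^2$ converges a.e.\ to that with $|u|^2$ and apply a generalized dominated convergence theorem, where the penalized growth $h_{\lambda,k}(x,|u_n|^2)|u_n|^2 \le \min\{f_{\lambda,k}(|u_n|^2)|u_n|^2,\ V(x)|u_n|^2/\nu\}$ supplies the equi-integrable majorant. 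Once these pieces are assembled, $\|u_n - u\| \to 0$, so $(PS)_{c_{\lambda,k}}$ holds.
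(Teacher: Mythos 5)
Your proposal is correct and follows essentially the same route as the paper: the identity $\|u_n-u\|^2=\Re\int h_{\lambda,k}(x,|u_n|^2)u_n\overline{(u_n-u)}\,\mathrm{d}x+o_n(1)$, the splitting into $B_r(0)$ and $B_r^c(0)$, compactness on the ball, and the combination of \eqref{del_3} with Lemma~\ref{lemmaPS} on the exterior. The ``mixed seminorm'' obstacle you anticipate does not actually arise, since the Hilbert-space identity $\langle u_n,u_n-u\rangle=\|u_n-u\|^2+o_n(1)$ absorbs the seminorm and potential terms globally, so only the nonlinear integral needs the interior/exterior decomposition (and your unused claim that Lemma~\ref{lemmaPS} controls $\|u_n\|_{L^{2^*_s}(B_r^c)}$ is not justified, but also not needed).
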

\begin{proof}
Let $(u_n)\subset E$ be a $(PS)_{c_{\lambda,k}}-$sequence for $J_{\lambda,k}$. In view of Lemma \ref{lemma4.2}, the sequence $(u_n)$ is bounded in $E$ and, up to a subsequence, $u_n \rightharpoonup  u$ weakly in $E$. Thus,
\begin{eqnarray}\label{LemmaPS_1}
	o_n(1)&=&J'_{\lambda,k}(u_n)(u_n-u)\nonumber\\
	&=& \|u_n\|^2-\|u\|^2+o_n(1)- \dsp\int_{\mathbb{R}^N}h_{\lambda,k}(x,|u_n|^2)u_n(\overline{u_n-u})\mathrm{d}x\,\nonumber
	\\
	&=& \|u_n-u\|^2-\dsp\int_{\mathbb{R}^N}h_{\lambda,k}(x,|u_n|^2)u_n(\overline{u_n-u})\,\mathrm{d}x.
\end{eqnarray} 
In light of Sobolev’s compact embedding $E\hookrightarrow L^{\alpha}_{loc}(\mathbb{R}^N,\mathbb{C})$, for $\alpha \in (2, 2^*_{s})$, $(f_2)$, \eqref{del_1} and  H\"{o}lder’s inequality, we obtain
\begin{equation}\label{PS_6}
\int_{B_r(0)}h_{\lambda,k}(x,|u_n|^2)u_n(\overline{u_n-u})\,\mathrm{d}x=o_n(1), 
\end{equation}
for each $r>0$. Furthermore, taking $r$ sufficiently large, using $(f_2)$, \eqref{del_1} and H\"{o}lder’s inequality, we have
\begin{equation}\label{PS_7}
	\int_{B^c_r(0)}h_{\lambda,k}(x,|u_n|^2)u_n\overline{u}\,\mathrm{d}x=o_n(1).
\end{equation}
Hence, it follows from \eqref{PS_6} and \eqref{PS_7}, for $r>R_0$ sufficiently large, that
\begin{eqnarray}\label{j1}
	\int_{\mathbb{R}^N}h_{\lambda,k}(x,|u_n|^2)u_n(\overline{u_n-u})\,\mathrm{d}x&=&\int_{B_r^c(0)}h_{\lambda,k}(x,|u_n|^2)|u_n|^2\,\mathrm{d}x+o_n(1)\nonumber\\
	&\leq&\frac{1}{\nu}\int_{B^c_r(0)}V(x)|u_n|^2\,\mathrm{d}x +o_n(1).
\end{eqnarray}
Therefore, Lemma \ref{lemmaPS}, \eqref{LemmaPS_1} and \eqref{j1} imply that  
$$
\|u_n-u\|\rightarrow 0 \mbox{ \ as } n \rightarrow \infty,
$$
which finishes the proof. 
\end{proof}
In view of Lemmas \ref{lem4.1} and \ref{lemma4.3} we have the following result:
\begin{lem}\label{lemma4.4}
For each $\lambda > 0$ and $k \in \mathbb{N}$, Problem \eqref{aux2} has at least a weak solution $u_{\lambda,k} \in E$ such that $J_{\lambda,k}(u_{\lambda,k}) = c_{\lambda,k}$.
\end{lem}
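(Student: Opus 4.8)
The plan is to assemble Lemma \ref{lemma4.4} directly from the three preceding lemmas via the Mountain Pass Theorem in its classical form. First I would recall that by Lemma \ref{lem4.1} the functional $J_{\lambda,k}\in C^1(E,\mathbb{R})$ satisfies the geometric hypotheses of the Mountain Pass Theorem: $J_{\lambda,k}(0)=0$, there exist $\delta,\rho>0$ with $J_{\lambda,k}(v)\geq\delta$ whenever $\|v\|=\rho$, and there exists $e\in E$ with $\|e\|>\rho$ and $J_{\lambda,k}(e)<0$. Together with Lemma \ref{lemma4.3}, which furnishes the $(PS)_{c_{\lambda,k}}$ condition at precisely the mountain pass level
\[
	c_{\lambda,k}:=\inf_{\gamma \in \Gamma_{\lambda,k}}\max_{t \in [0,1]}J_{\lambda,k}(\gamma(t)),\qquad
	\Gamma_{\lambda,k}:=\big\{ \gamma \in C([0, 1], E) : \gamma(0) = 0,\ J_{\lambda,k}(\gamma(1)) < 0 \big\},
\]
all the hypotheses of the Ambrosetti--Rabinowitz Mountain Pass Theorem are in place, so $c_{\lambda,k}$ is a critical value of $J_{\lambda,k}$.

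Concretely, I would argue as follows. Apply the version of the Mountain Pass Theorem without the $(PS)$ condition (as already invoked in the excerpt, cf. \cite{w}) to obtain a sequence $(u_n)\subset E$ with $J_{\lambda,k}(u_n)\to c_{\lambda,k}$ and $J'_{\lambda,k}(u_n)\to 0$. By Lemma \ref{lemma4.2} this sequence is bounded in $E$, and by Lemma \ref{lemma4.3} it admits a subsequence, still denoted $(u_n)$, converging strongly in $E$ to some $u_{\lambda,k}\in E$. Since $J_{\lambda,k}\in C^1(E,\mathbb{R})$, passing to the limit yields $J_{\lambda,k}(u_{\lambda,k})=c_{\lambda,k}$ and $J'_{\lambda,k}(u_{\lambda,k})=0$; by the characterization of weak solutions of \eqref{aux2} as critical points of $J_{\lambda,k}$ recorded earlier, $u_{\lambda,k}$ is a weak solution of Problem \eqref{aux2}. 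Finally, $c_{\lambda,k}\geq\delta>0=J_{\lambda,k}(0)$ forces $u_{\lambda,k}\neq 0$, so the solution is nontrivial. This holds for every $\lambda>0$ and $k\in\mathbb{N}$, as claimed.

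There is no genuine obstacle remaining at this stage: the compactness difficulty (the failure of the Palais--Smale condition for $I_{\lambda,k}$) has already been resolved by the penalization that produced $J_{\lambda,k}$ and its concentration estimate Lemma \ref{lemmaPS}, and the boundedness of $(PS)$ sequences is handled in Lemma \ref{lemma4.2}. The only points requiring a word of care are that the level at which $(PS)$ is verified in Lemma \ref{lemma4.3} coincides with the minimax value supplied by the Mountain Pass Theorem (which it does, by construction), and that the limit point is nontrivial, which follows from the strict positivity of $c_{\lambda,k}$ established in Lemma \ref{lem4.1}$(ii)$. Hence the proof is a short citation-and-assembly argument.
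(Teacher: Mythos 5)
Your proposal is correct and is essentially identical to the paper's argument: the paper also obtains Lemma \ref{lemma4.4} by combining the mountain pass geometry of Lemma \ref{lem4.1} with the Palais--Smale condition of Lemma \ref{lemma4.3} (via the boundedness in Lemma \ref{lemma4.2}) and the Mountain Pass Theorem. Nothing further is needed.
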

\section{$L^{\infty}$--estimates}\label{5}
We start this section by proving a uniform estimate for the magnetic Gagliardo semi-norm of the solution $u_{\lambda,k}$ of problem \eqref{aux2},
obtained in Lemma \ref{lemma4.4}. 
\begin{lem}\label{lemma5.1}
Let $u_{\lambda,k}$ be the solution obtained in Lemma \ref{lemma4.4}. Then, there exists a constant $M_0$, which depends
only on $N, \theta, s, p$ \big(independent of $\lambda$, $k$ and $R_0$\big), such that  
$$
[u_{\lambda,k}]_{s,A}^2=\int \! \! \! \int_{\mathbb{R}^{2N} }\frac{|u_{\lambda,k}(x)-u_{\lambda,k}(y)e^{i A\big(\frac{x+y}{2}\big).(x-y)}|^2}{|x-y|^{N+2s}}\,\mathrm{d}x\mathrm{d}y \leq M_0.
$$
\end{lem}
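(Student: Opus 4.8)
The plan is to read the bound off the mountain pass characterization of $u_{\lambda,k}$, reusing the Ambrosetti--Rabinowitz type estimate \eqref{stimative_1} from the proof of Lemma \ref{lemma4.2}. Since $u_{\lambda,k}$ is a critical point of $J_{\lambda,k}$ we have $J'_{\lambda,k}(u_{\lambda,k})u_{\lambda,k}=0$, and since it realizes the mountain pass level, $J_{\lambda,k}(u_{\lambda,k})=c_{\lambda,k}$. Hence $c_{\lambda,k}=J_{\lambda,k}(u_{\lambda,k})-\frac{\theta}{2}J'_{\lambda,k}(u_{\lambda,k})u_{\lambda,k}$, and running the chain of inequalities in \eqref{stimative_1} with $u_{\lambda,k}$ in place of $u_n$ — now with genuine equalities, no $o_n(1)$ terms, and using only $h_{\lambda,k}\geq 0$, the sign condition \eqref{Hhlambdak} on $B_{R_0}(0)$, the decay bound \eqref{del_4} together with $(V_1)$ outside $B_{R_0}(0)$, the inequality $\int_{B_{R_0}^c(0)}V(x)|u_{\lambda,k}|^2\,\mathrm{d}x\leq\|u_{\lambda,k}\|^2$, and the explicit value $\nu=2\theta/(\theta-2)$ — yields
$$c_{\lambda,k}\geq\frac{\theta-2}{4\theta}\,\|u_{\lambda,k}\|^2.$$

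Next, I would invoke the comparison $c_{\lambda,k}\leq c_0$, already established just after \eqref{minimax_1} from the pointwise inequality $J_{\lambda,k}\leq I_0$, together with the fact — also recorded there — that $c_0$ does not depend on $\lambda$, $k$ or $R_0$. Combining the two displays gives
$$\|u_{\lambda,k}\|^2\leq\frac{4\theta}{\theta-2}\,c_0=:M_0,$$
a constant independent of $\lambda$, $k$ and $R_0$. Finally, by $(V_1)$ one has $\|u_{\lambda,k}\|^2=[u_{\lambda,k}]_{s,A}^2+\int_{\mathbb{R}^N}V(x)|u_{\lambda,k}|^2\,\mathrm{d}x\geq[u_{\lambda,k}]_{s,A}^2$, so $[u_{\lambda,k}]_{s,A}^2\leq M_0$, which is the assertion.

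The only point requiring care is the uniformity of $M_0$: one must be sure that nothing feeding into the two displays above sees $\lambda$, $k$ or $R_0$. The constant $\frac{\theta-2}{4\theta}$ and the cutoff parameter $\nu$ are explicit in $\theta$; and the bound $c_{\lambda,k}\leq c_0$ with the $(\lambda,k,R_0)$-independence of $c_0$ is exactly what was argued in Section \ref{4}, the latter because an upper bound for $c_0$ is obtained by maximizing $I_0(t\varphi)$ over $t\geq 0$ for a fixed $\varphi\in C_0^\infty(B_1(0))\setminus\{0\}$ as in the proof of Lemma \ref{lem4.1}$(iii)$, on whose support the penalizations are invisible and $V\leq V_\infty$ by $(V_2)$. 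Beyond this bookkeeping there is no real obstacle: the content of the lemma is simply that the mountain pass energy, and hence the energy norm of the solution, is controlled by the $\lambda,k,R_0$-free level $c_0$, and the diamagnetic/structural identity $\|\cdot\|^2\geq[\cdot]_{s,A}^2$ then transfers the bound to the magnetic Gagliardo semi-norm.
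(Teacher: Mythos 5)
Your argument is exactly the paper's proof: apply the chain of inequalities \eqref{stimative_1} to the critical point $u_{\lambda,k}$ (where the $o_n(1)$ terms vanish since $J'_{\lambda,k}(u_{\lambda,k})u_{\lambda,k}=0$), invoke $c_{\lambda,k}\leq c_0$ with $c_0$ independent of $\lambda$, $k$, $R_0$, and pass from $\|u_{\lambda,k}\|^2$ to $[u_{\lambda,k}]_{s,A}^2$ via $(V_1)$. The additional remark on why $c_0$ is uniform is a correct elaboration of what the paper records after \eqref{minimax_1}; nothing further is needed.
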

\begin{proof} 
	In view of the estimate \eqref{stimative_1} obtained in the proof of Lemma \ref{lemma4.2} and recalling that $c_{\lambda,k}\leq c_0$, we have
	\begin{eqnarray}
		c_0\geq c_{\lambda,k}=J_{\lambda,k}(u_{\lambda,k})-\frac{\theta}{2}J'_{\lambda,k}(u_{\lambda,k}).u_{\lambda,k} \geq \frac{\theta-2}{4\theta}\|u_{\lambda,k}\|^2
	\end{eqnarray}
which implies
\begin{equation*}
	\|u_{\lambda,k}\|^2 \leq \Big(\frac{4\theta}{\theta-2}\Big)c_0=:M_0.
\end{equation*}
Therefore,
\begin{equation*}
	[u_{\lambda,k}]^2_{s,A} \leq M_0,
\end{equation*}
this completes the proof.
\end{proof}
The next Lemma is crucial in our arguments, since it establishes an important estimate involving the $L^{\infty}-$norm of the solution $u_{\lambda,k}$. For this purpose, we shall use the Moser iteration method. 

\begin{lem}\label{lemma5.2} For each $\lambda> 0$ and $k \in \mathbb{N}$, $|u_{\lambda, k}| \in  L^{\infty}(\mathbb{R}^N,\mathbb{R})$, and there exists $C > 0$ that depends only on
$N$, $s$, $\theta$, $p$ and $C_0$, such that
\begin{equation}\label{Eq Lemm5.2}
	\big\||u_{\lambda,k} |\big\|_{\infty} \leq C\big(1+\lambda k^{\frac{q-p}{2}}\big)^{\gamma} \|u_{\lambda,k}\|_{2_s^*},
\end{equation}
where $\gamma ={\frac{1}{2(\beta_1-1)}}$ and  $\beta_1=\frac{{2_s^*-p+2}}{2}$.	
\end{lem}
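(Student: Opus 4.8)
The plan is to run a De Giorgi–Moser iteration directly on $u=u_{\lambda,k}$, using the fractional magnetic equation \eqref{aux2} tested against a suitable power of $|u|$ truncated at a level $L$. Concretely, for $\beta\geq 1$ and $L>0$ set $u_L:=\min\{|u|,L\}$ and use as test function $\phi:=u\,u_L^{2(\beta-1)}$, which belongs to $E$ because $u_L$ is bounded. The key algebraic input is the pointwise inequality (analogous to the ones used in \cite{ambro3,UB}) relating the magnetic Gagliardo form of $\phi$ and $u$ to the plain fractional form of $|u|\,u_L^{\beta-1}$: using the diamagnetic-type computation one obtains, for suitable constants,
\[
\Re\!\left[\big(u(x)-u(y)e^{iA(\frac{x+y}{2})\cdot(x-y)}\big)\overline{\big(\phi(x)-\phi(y)e^{iA(\frac{x+y}{2})\cdot(x-y)}\big)}\right]\geq \frac{1}{C\beta}\,\big|w(x)-w(y)\big|^2 ,
\]
where $w:=|u|\,u_L^{\beta-1}$, so that after discarding the (nonnegative) potential term and applying the Sobolev inequality \eqref{ime_2}–\eqref{im_1.0},
\[
\|w\|_{2_s^*}^2\leq C\beta\int_{\mathbb{R}^N}h_{\lambda,k}(x,|u|^2)|u|^2 u_L^{2(\beta-1)}\,\mathrm{d}x .
\]

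Next I would estimate the right-hand side. By \eqref{del_1} and $(f_2)$ we have $h_{\lambda,k}(x,|u|^2)|u|^2\leq C_0(1+\lambda k^{\frac{q-p}{2}})|u|^{2_s^*}$, hence the integral is bounded by $C_0(1+\lambda k^{\frac{q-p}{2}})\int |u|^{2_s^*-2}(|u|u_L^{\beta-1})^2$. Splitting $\mathbb{R}^N$ into $\{|u|\leq\mathcal{K}\}$ and $\{|u|>\mathcal{K}\}$ and using Hölder with exponents $2_s^*/(2_s^*-2)$ and $2_s^*/2$ on the second piece, the tail $\big(\int_{\{|u|>\mathcal K\}}|u|^{2_s^*}\big)^{(2_s^*-2)/2_s^*}$ can be made $\leq \tfrac{1}{2C\beta_1 C_0(1+\lambda k^{(q-p)/2})}$ by choosing $\mathcal K=\mathcal K(\lambda,k)$ large; absorbing that term on the left for the base step $\beta=\beta_1$ gives, after letting $L\to\infty$ (monotone convergence, legitimate since $\|u\|_{2_s^*}<\infty$),
\[
\big\||u|^{\beta_1}\big\|_{2_s^*}^2\leq C\beta_1\,C_0(1+\lambda k^{\frac{q-p}{2}})\,\mathcal K^{2_s^*-2}\int_{\{|u|\leq\mathcal K\}}|u|^{2}\cdots
\]
i.e. $|u|\in L^{\beta_1 2_s^*}$ with a quantitative bound. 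For the iterative step $\beta_{m+1}=\beta_1\beta_m$ one repeats the computation keeping the full subcritical weight $|u|^{2_s^*-2}$ under Hölder without needing to truncate $|u|$, obtaining the recursion
\[
\big\||u|^{\beta_{m+1}}\big\|_{2_s^*}\leq \big(C(1+\lambda k^{\frac{q-p}{2}})\beta_{m+1}\big)^{\frac{1}{2\beta_{m+1}}}\big\||u|^{\beta_m}\big\|_{\beta_m 2_s^*},
\]
where I used $|u|\in L^{\beta_m 2_s^*}$ from the previous step and that $\beta_1=\frac{2_s^*-p+2}{2}>1$. Iterating and taking $m\to\infty$, the product $\prod_m \beta_m^{1/(2\beta_m)}$ and $\prod_m (C(1+\lambda k^{(q-p)/2}))^{1/(2\beta_m)}$ converge because $\sum_m \beta_m^{-1}<\infty$ (geometric) and $\sum_m \log\beta_m/\beta_m<\infty$; this yields $\||u|\|_\infty\leq C(1+\lambda k^{\frac{q-p}{2}})^{\gamma}\|u\|_{2_s^*}$ with $\gamma=\sum_m\frac{1}{2\beta_m}=\frac{1}{2(\beta_1-1)}$, which is exactly \eqref{Eq Lemm5.2}.

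I expect the main obstacle to be the very first technical point: justifying that $\phi=u\,u_L^{2(\beta-1)}\in E$ and, more importantly, proving the pointwise commutator inequality bounding $\Re[(u(x)-u(y)e^{i\cdots})\overline{(\phi(x)-\phi(y)e^{i\cdots})}]$ from below by $\tfrac{1}{C\beta}|w(x)-w(y)|^2$ in the magnetic setting — here one cannot invoke a Kato inequality (as the authors stress in the introduction) and must argue by the elementary inequality $|a-be^{i\vartheta}|^2\geq (|a|-|b|)^2$ together with the scalar monotonicity estimate $(|a|-|b|)(|a|t_a^{2(\beta-1)}-|b|t_b^{2(\beta-1)})\geq \tfrac{1}{C\beta}(|a|t_a^{\beta-1}-|b|t_b^{\beta-1})^2$ for the truncated powers, exactly as in \cite[Lemma 2.4]{ambro3} or the corresponding step in \cite{UB}. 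A secondary care point is that all constants must be tracked to depend only on $N,s,\theta,p,C_0$ (the level $\mathcal K$ and hence the implicit constant may depend on $\lambda,k$, but only through the explicit factor $(1+\lambda k^{(q-p)/2})^\gamma$), which forces one to be slightly careful in the base step absorption argument; once the base step is done, the remaining iteration is routine.
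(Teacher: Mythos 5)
Your overall skeleton coincides with the paper's: test \eqref{S_1} with $\phi=u\,u_L^{2(\beta-1)}$, reduce the magnetic form to the scalar one via $\Re\big[(u(x)-u(y)e^{i\vartheta})\overline{(\phi(x)-\phi(y)e^{i\vartheta})}\big]\geq(|u(x)|-|u(y)|)(|u(x)|u_L^{2(\beta-1)}(x)-|u(y)|u_L^{2(\beta-1)}(y))$, pass to $\frac{S}{\beta^2}\||u|u_L^{\beta-1}\|_{2^*_s}^2$ by the scalar monotonicity of truncated powers and Sobolev, then iterate. You also correctly identify the replacement for Kato's inequality as the key technical point. The genuine gap is in your treatment of the right-hand side. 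You invoke the critical bound $(f_2)$, getting the weight $|u|^{2^*_s-2}$, and then need a Brezis--Kato truncation at a level $\mathcal K$ with absorption. This fails to deliver the stated lemma for three reasons. First, after absorbing the tail, your base-step remainder is $\mathcal K^{2^*_s-2}\int_{\{|u|\leq\mathcal K\}}|u|^{2\beta_1}\,\mathrm{d}x$ with $2\beta_1=2^*_s-p+2<2^*_s$; a function that is merely in $L^{2^*_s}$ need not lie in $L^{2\beta_1}$ (it can decay too slowly at infinity), so this integral may be infinite. Second, the truncation level $\mathcal K$ is chosen so that the tail mass $\big(\int_{\{|u|>\mathcal K\}}|u|^{2^*_s}\big)^{(2^*_s-2)/2^*_s}$ is small; this depends on where the mass of $u=u_{\lambda,k}$ sits, not just on $M_0$, so $\mathcal K=\mathcal K(\lambda,k)$ enters the final constant in an uncontrolled way and destroys the required dependence $C=C(N,s,\theta,p,C_0)$ times the explicit factor $(1+\lambda k^{\frac{q-p}{2}})^{\gamma}$. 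Third, with Hölder exponents $2^*_s/(2^*_s-2)$ and $2^*_s/2$ the natural iteration ratio is $2^*_s/2$, not $\beta_1=(2^*_s-p+2)/2$, so you would not recover the stated $\gamma=\frac{1}{2(\beta_1-1)}$; your later claim to ``keep the full subcritical weight $|u|^{2^*_s-2}$'' is self-contradictory, since that is the critical weight, and with it the Hölder step returns $\|w\|_{2^*_s}^2$ on both sides.

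The paper avoids all of this by using the \emph{subcritical} bound $(f_1)$ together with \eqref{del_1}: $h_{\lambda,k}(x,|u|^2)|u|^2u_L^{2(\beta-1)}\leq C_0(1+\lambda k^{\frac{q-p}{2}})|u|^{p-2}|u|^{2\beta}$, followed by Hölder with exponents $2^*_s/(p-2)$ and $2^*_s/(2^*_s-p+2)$. This produces the factor $\||u|\|_{2^*_s}^{p-2}$, which is uniformly bounded by Lemma \ref{lemma5.1} (no absorption, no truncation level), and the remaining integral is $\big(\int|u|^{2\beta 2^*_s/(2^*_s-p+2)}\big)^{(2^*_s-p+2)/2^*_s}$, which for the base choice $\beta=\beta_1=(2^*_s-p+2)/2$ equals $\||u|\|_{2^*_s}^{2\beta_1}$ and is finite. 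The iteration $\beta_{m}=\beta_1^m$ then closes with exactly the stated $\gamma$ and a constant depending only on $N,s,\theta,p,C_0$. Replacing your use of $(f_2)$ by $(f_1)$ and the corresponding Hölder exponents repairs the argument and makes it coincide with the paper's.
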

\begin{proof}
The proof follows some ideas from \cite[Lemma 2.8]{ambro3}. We denote, by simplicity, $u\!=\!u_{\lambda,k}$. For $L > 0$, we define $u_L=\min \left\lbrace |u|,L \right\rbrace$. Taking $\phi =u u_L^{2(\beta -1)}$ as test function in \eqref{S_1}, where $\beta >1$ will be chosen later, we can deduce
\begin{eqnarray}\label{M_1}
&&\Re\left(\int \! \! \! \int_{\mathbb{R}^{2N} }\frac{\Big(u(x)-u(y)e^{i A\big(\frac{x+y}{2}\big).(x-y)}\Big)\overline{\Big(u u_L^{2(\beta -1)}(x)-u u_L^{2(\beta -1)}(y)e^{i A\big(\frac{x+y}{2}\big).(x-y)}\Big)}}{|x-y|^{N+2s}}\,\mathrm{d}x\mathrm{d}y\right)\nonumber \\
& & +\int_{\mathbb{R}^{N}}V(x)|u|^2u_L^{2(\beta -1)} \,\mathrm{d}x- \int_{\mathbb{R}^{N}}h_{\lambda,k}(x,|u|^2)|u|^2 u_L^{2(\beta -1)}\,\mathrm{d}x=0.
\end{eqnarray}
Now, using the fact that $\Re(z) \leq |z|$, for all $z$ in $\mathbb{C}$ and $|e^{i t}|=1$ for all $t$ in $\mathbb{R}$, we obtain
\begin{eqnarray*}\label{M_2}
& & \Re\left( \Big(u(x)-u(y)e^{i A\big(\frac{x+y}{2}\big).(x-y)}\Big)\overline{\Big(u u_L^{2(\beta -1)}(x)-u u_L^{2(\beta -1)}(y)e^{i A\big(\frac{x+y}{2}\big).(x-y)}\Big)}  \right) \nonumber \\
& & \geq\Big(|u(x)|-|u(y)|\Big)\Big(|u(x)| u_L^{2(\beta -1)}(x)-|u(y)| u_L^{2(\beta -1)}(y)\Big),
\end{eqnarray*}
which implies that
\begin{eqnarray}\label{M_4}
& & \Re\left(\int \! \! \! \int_{\mathbb{R}^{2N} }\frac{\Big(u(x)-u(y)e^{i A\big(\frac{x+y}{2}\big).(x-y)}\Big)\overline{\Big(u u_L^{2(\beta -1)}(x)-u u_L^{2(\beta -1)}(y)e^{i A\big(\frac{x+y}{2}\big).(x-y)}\Big)}}{|x-y|^{N+2s}}\,\,\mathrm{d}x\mathrm{d}y\right) \nonumber \\	
&& \geq \int \! \! \! \int_{\mathbb{R}^{2N} }\frac{\Big(|u(x)|-|u(y)|\Big)\Big(|u(x)| u_L^{2(\beta -1)}(x)-|u(y)|u_L^{2(\beta -1)}(y)\Big)}{|x-y|^{N+2s}}\,\mathrm{d}x\mathrm{d}y.
\end{eqnarray}
For any $t \geq0$, let us define the functions
$$
\alpha(t):=\alpha_{L,\beta}(t)=tt_L^{2(\beta -1)},\, \mbox{where} \ \ t_L=\min\{|t|,L\},
$$
$$
\Lambda(t)=\frac{|t|^2}{2} \quad \mbox{and} \quad \Gamma(t)=\int_{0}^{t}(\alpha'(s))^{\frac{1}{2}}\,\mathrm{d}s.
$$
The following estimates hold:
\begin{equation}\label{af1}
	\Lambda'(a-b)(\alpha(a)-\alpha(b))\geq |\Gamma(a)-\Gamma(b)|^2, \quad \forall \, a,b \in \mathbb{R}
\end{equation}
and
\begin{equation}\label{af2}
\Gamma(|t|)\geq \frac{1}{\beta}|t|t_L^{\beta -1}.
\end{equation}
By using \eqref{af1}, we have
\begin{equation}\label{M_5}
|\Gamma(|u(x)|)-\Gamma(|u(y)|)|^2 \leq (|u(x)|-|u(y)|)\Big(|u(x)|u_L^{2(\beta -1)}(x) - |u(y)|u_L^{2(\beta -1)}(y) \Big).
\end{equation}
Next, combining \eqref{M_4} with \eqref{M_5}, we obtain
\begin{eqnarray}\label{M_6}
&&\Re\left(\int \! \! \! \int_{\mathbb{R}^{2N} }\frac{\Big(u(x)-u(y)e^{i A\big(\frac{x+y}{2}\big).(x-y)}\Big)\overline{\Big(u u_L^{2(\beta -1)}(x)-u u_L^{2(\beta -1)}(y)e^{i A\big(\frac{x+y}{2}\big).(x-y)}\Big)}}{|x-y|^{N+2s}}\,\mathrm{d}x\mathrm{d}y\right)\nonumber \\	
& & \geq \int \! \! \! \int_{\mathbb{R}^{2N} }\frac{|\Gamma(|u(x)|)-\Gamma(|u(y)|)|^2}{|x-y|^{N+2s}}\,\mathrm{d}x\mathrm{d}y=\big[\Gamma(|u|) \big]_{s}^2,
\end{eqnarray}
and using Lemma \ref{ime_1} and \eqref{af2}, we note
\begin{eqnarray}\label{M_7}
	[\Gamma(|u|)]_{s}^2\geq S\|\Gamma(|u|) \|^2_{2^*_{s}} \geq S\frac{1}{\beta^2}\||u|u_L^{\beta -1} \|^2_{2^*_{s}}.
\end{eqnarray}
In view of \eqref{M_1},\eqref{M_4} and \eqref{M_7}, we infer
\begin{equation*}\label{M_8}
	S\frac{1}{\beta^2}\big\||u|u_L^{\beta -1} \big\|^2_{2^*_{s}}+\int_{\mathbb{R}^{N}}V(x)|u|^2u_L^{(2(\beta -1)}\,\mathrm{d}x\leq\int_{\mathbb{R}^{N} }h_{\lambda,k}(x,|u|^2)|u|^2u_L^{2(\beta -1)}\,\mathrm{d}x.
\end{equation*}
Since $V\geq 0$, $|u|\geq u_L>0$, using the above estimate jointly with \eqref{del_1} and $(f_1)$, we reach
\begin{eqnarray}
		S\frac{1}{\beta^2}\big\||u|u_L^{\beta -1} \|^2_{2^*_{s}}\leq C_0(1+\lambda k^{\frac{q-p}{2}})\int_{\mathbb{R}^{N}}|u|^{2\beta}|u|^{p-2}\,\mathrm{d}x.
\end{eqnarray}
By H\"{o}lder’s inequality with exponents $2^*_{s}/(p-2)$ and $2^*_{s}/(2^*_{s}-p+2)$, we see
\begin{eqnarray}\label{M_9}
\big\||u|u_L^{\beta -1} \big\|^2_{2^*_{s}} &\leq& S^{-1}\beta^2C_0(1+\lambda k^{\frac{q-p}{2}})\Bigg(\int_{\mathbb{R}^{N}}|u|^{2^*_{s}}\,\mathrm{d}x\Bigg)^{\frac{p-2}{2^*_{s}}}\Bigg(\int_{\mathbb{R}^{N} }|u|^{\frac{2\beta 2^*_{s}}{2^*_{s}-p+2}}\,\mathrm{d}x\Bigg)^{\frac{2^*_{s}-p+2}{2^*_{s}}}\nonumber\\
&=& S^{-1}\beta^2C_0(1+\lambda k^{\frac{q-p}{2}})\big\||u|\big\|^{p-2}_{2^*_{s}}\Bigg(\int_{\mathbb{R}^{N} }|u|^{\frac{2\beta 2^*_{s}}{2^*_{s}-p+2}}\,\mathrm{d}x\Bigg)^{\frac{2^*_{s}-p+2}{2^*_{s}}}.
\end{eqnarray}
Moreover, recalling \eqref{im_1.0} and Lemma \ref{lemma5.1}, we have
\begin{equation}\label{eq5.15}
	\big\||u|\big\|^{2}_{2^*_{s}}\leq  S^{-1}M_0.
\end{equation}
Now, we observe that if $|u|\in L^{\frac{2^*_{s}2\beta}{2^*_{s}-p+2}}(\mathbb{R}^N,\mathbb{R})$ and combining \eqref{eq5.15} with \eqref{M_9}, then
\begin{equation}\label{M_10}
\big\||u|u_L^{\beta -1} \big\|^2_{2^*_{s}}\leq C_1(1+\lambda k^{\frac{q-p}{2}})\beta^2\Bigg(\int_{\mathbb{R}^{N} }|u|^{\frac{2\beta 2^*_{s}}{2^*_{s}-p+2}}\,\mathrm{d}x\Bigg)^{\frac{2^*_{s}-p+2}{2^*_{s}}} < \infty,
\end{equation} 
where $C_1=C_1(N,s,\theta,p,C_0)$. Hence, since $ u_L \rightarrow |u|$  almost everywhere as $L\rightarrow \infty$, using Fatou’s Lemma in \eqref{M_10}, we conclude that
\begin{equation*}
	\big\||u|^{\beta} \big\|^2_{2^*_{s}}\leq C_1(1+\lambda k^{\frac{q-p}{2}})\beta^2\Bigg(\int_{\mathbb{R}^{N} }|u|^{\frac{2\beta 2^*_{s}}{2^*_{s}-p+2}}\,\mathrm{d}x\Bigg)^{\frac{2^*_{s}-p+2}{2^*_{s}}}
	\end{equation*}
from which we deduce that
\begin{equation*}
	\big\||u|^{\beta} \big\|^{\frac{1}{\beta}}_{L^{2^*_{s}}(\mathbb{R}^N,\mathbb{R})}\leq [C_1(1+\lambda k^{\frac{q-p}{2}})]^{\frac{1}{2\beta}}\beta^{\frac{1}{\beta}}\Bigg(\int_{\mathbb{R}^{N} }|u|^{\frac{2\beta 2^*_{s}}{2^*_{s}-p+2}}\,\mathrm{d}x\Bigg)^{\frac{2^*_{s}-p+2}{2^*_{s}2\beta}}\!\!,
\end{equation*}
that is,
\begin{equation}\label{I_1}
	\big\||u| \big\|_{2^*_{s}\beta}\leq [C_1(1+\lambda k^{\frac{q-p}{2}})]^{\frac{1}{2\beta}}\beta^{\frac{1}{\beta}}\Bigg(\int_{\mathbb{R}^{N} }|u|^{\frac{2\beta 2^*_{s}}{2^*_{s}-p+2}}\,\mathrm{d}x\Bigg)^{\frac{2^*_{s}-p+2}{2^*_{s}2\beta}}
\end{equation}
thus, $|u| \in L^{2^*_{s}\beta}(\mathbb{R}^N,\mathbb{R})$.

   
Let us use inequality \eqref{I_1} in order to obtain the desired $L^{\infty}-$estimate, through Moser iteration method. For this, setting $\beta=\beta_1:=(2^*_{s}-p+2)/2$ in \eqref{I_1}, there holds
\begin{equation}\label{I_2}
\big\||u| \big\|_{2^*_{s}\beta_1}\leq [C_1(1+\lambda k^{\frac{q-p}{2}})]^{\frac{1}{2\beta_1}}\beta_1^{\frac{1}{\beta_1}}\big\||u| \big\|_{2^*_{s}}.  
\end{equation}
Now, when $\beta =\beta_2:=\beta_1^2$ in \eqref{I_1}, we have $2\beta_2 2^*_{s}/(2^*_{s}-p+2)=2^*_{s}\beta_1$ and we deduce that
\begin{eqnarray*}
	\big\||u| \big\|_{2^*_{s}\beta_2}&\leq& [C_1(1+\lambda k^{\frac{q-p}{2}})]^{\frac{1}{2\beta_2}}\beta_2^{\frac{1}{\beta_2}}\big\||u| \big\|_{2^*_{s}\beta_1}\\
	&\leq& [C_1(1+\lambda k^{\frac{q-p}{2}})]^{\frac{1}{2\beta_1}+\frac{1}{2\beta_2}}\beta_1^{\frac{1}{\beta_1}}\beta_2^{\frac{1}{\beta_2}}\big\||u| \big\|_{2^*_{s}},
\end{eqnarray*}
once the estimate \eqref{I_2} holds.
Arguing by iteration $m$ times for $m \geq 2$, with $\beta\!\!=\!\!\beta_m\!\!:=\!\!\beta_{m-1}\beta_1\!\!=\!\!\beta_1^{m}$ in \eqref{I_1} and using that $2\beta_m2^*_{s}/(2^*_{s}-p+2)=2^*_{s}\beta_{m-1}$, we deduce
\begin{equation}\label{I_3}
\big\||u| \big\|_{2^*_{s}\beta_m}\leq [C_1(1+\lambda k^{\frac{q-p}{2}})]^{\frac{1}{2\beta_1}+\frac{1}{2\beta_2}+\cdots +\frac{1}{2\beta_m}}\beta_1^{\frac{1}{\beta_1}}\beta_2^{\frac{1}{\beta_2}}\cdots \beta_m^{\frac{1}{\beta_m}}\big\||u| \big\|_{2^*_{s}},
\end{equation}
which implies that $|u|\in L^{2^*_{s}\beta_m}(\mathbb{R}^N,\mathbb{R})$, for all $m \geq 2$.
Once that,
\begin{equation*}
	\frac{1}{2}\sum_{j=1}^{m}\frac{1}{\beta_j}\leq\frac{1}{2}\sum_{j=1}^{\infty}\Big(\frac{1}{\beta_1}\Big)^j=\frac{1}{2(\beta_1-1)}=\gamma
\end{equation*}
and
\begin{equation*}
\beta_1^{\frac{1}{\beta_1}}\beta_2^{\frac{1}{\beta_2}}\cdots \beta_m^{\frac{1}{\beta_m}}\leq \beta_1^{\sum_{j=1}^{\infty}\frac{j}{\beta_1^{j}}}=\beta_1^{\tilde{\gamma}}, \quad \tilde{\gamma}=\frac{\beta_1}{(\beta_1-1)^2}
\end{equation*}
it follows from \eqref{I_3} that
\begin{equation*}
\big\||u| \big\|_{p}\leq [C_1(1+\lambda k^{\frac{q-p}{2}})]^{\gamma}\beta_1^{\tilde{\gamma}}\big\||u| \big\|_{2^*_{s}}, 
\end{equation*}
where $p:=2^*_{s}\beta_m=2^*_{s}\beta^m \geq 2^*_{s}$, for all  $m \geq 2$ . Since $\beta_m=\beta_1^{m}\rightarrow \infty$ as $m \rightarrow \infty$,
we conclude that Lemma is valid for 
$
C=C_1^{\gamma}\beta_1^{\tilde{\gamma}}
$
with 
$
\beta_1=\frac{{2_s^*-p+2}}{2}.
$
\end{proof}
 In view of Lemma \ref{lemma5.2}, since  $\eqref{Eq Lemm5.2}$  and  \eqref{eq5.15} hold, we are able to find suitable values of $\lambda$ and $k$ such that the following inequality holds true
		\begin{equation*}
			\big\||u_{\lambda,k}|\big\|_{\infty} \leq C_1^{\gamma}\beta_1^{\tilde{\gamma}}\big(1+\lambda k^{\frac{q-p}{2}}\big)^{\gamma}C_2 < k, 
		\end{equation*}
		where $C_2=(S^{-1}M_0)^{\frac{1}{2}}$. In fact, we shall verify that
		\begin{equation*}
			C_1^{\gamma}\beta_1^{\tilde{\gamma}}\big(1+\lambda k^{\frac{q-p}{2}}\big)^{\gamma}C_2 < k, 
		\end{equation*}
		or equivalently,
		\begin{equation*}
			\lambda k^{\frac{q-p}{2}}\leq \frac{1}{C_1\beta_{1}^{\frac{\tilde{\gamma}}{\gamma}}}(C_2^{-1}k)^{\frac{1}{\gamma}}-1.
		\end{equation*}
		Consider $k>0$ such that
		\begin{equation*}
			\frac{1}{C_1\beta_{1}^{\frac{\tilde{\gamma}}{\gamma}}}(C_2^{-1}k)^{\frac{1}{\gamma}}-1>0
		\end{equation*}
		and fix $\lambda_0>0$ such that
		\begin{equation*}
			\lambda <\lambda_{0} \leq \Bigg(\frac{1}{C_1\beta_{1}^{\frac{\tilde{\gamma}}{\gamma}}}(C_2^{-1}k)^{\frac{1}{\gamma}}-1\Bigg)\frac{1}{k^{\frac{q-p}{2}}}.
		\end{equation*}
		Thus, by taking $k_0>C_3:= C_1^{\gamma}\beta_1^{\tilde{\gamma}}C_2$ we obtain $\lambda_0>0$ such that
		\begin{equation}\label{eq}
			\big\||u_{\lambda,k_0}|\big\|_{\infty} \leq k_0,\quad \forall \, \lambda \in [0,\lambda_{0}).
		\end{equation}

        
\section{Proof of Theorem \ref{theorem}}\label{6}

In light of Lemma \ref{lemma4.4}, for each $\lambda >0$ and $k \in \N$, the auxilary problem \eqref{aux2} admits a solution $u_{\lambda,k}$ in $E$. Thereby, in order to prove the existence of solution for the original problem \eqref{problem}, in view that \eqref{eq} holds, it is sufficient to prove that the following inequality holds:
\begin{equation*}
	 f_{\lambda,k_0}(|u_{\lambda,k_0}(x)|^2) \leq \frac{V(x)}{\nu},\quad \forall \, |x|>R_0 \mbox{ \ and \ } \forall\, \lambda \in [0, \lambda_0).
\end{equation*}

\begin{lem}\label{lem5.3}
	For each $\lambda> 0$ and $k \in \mathbb{N}$, set $u_{\lambda,k}$ solutions for the auxiliary problem \eqref{aux2}, such that $J_{\lambda,k}(u_{\lambda,k})=c_{\lambda,k}$. Then,
	$$
	|u_{\lambda ,k}| \leq \frac{R_0^{N-2s}}{|x|^{N-2s}}\big \||u_{\lambda,k}|\big\|_\infty, \quad \forall\,  |x|\geq R_0.
	$$
\end{lem}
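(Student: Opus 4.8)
The plan is a comparison argument with an explicit barrier built from the fundamental solution of $(-\Delta)^{s}$. Write $u:=u_{\lambda,k}$ and $M:=\big\||u|\big\|_{\infty}$, which is finite by Lemma \ref{lemma5.2}, and set
$$\Phi(x):=M\Big(\frac{R_0}{|x|}\Big)^{N-2s},\qquad x\neq 0.$$
Since $N>2s$, $|x|^{2s-N}$ is, up to a positive multiple, the fundamental solution of $(-\Delta)^{s}$ on $\mathbb{R}^{N}$, so $\Phi$ is $s$-harmonic away from the origin; in particular $(-\Delta)^{s}\Phi\equiv 0$ on $B_{R_0}^{c}$. Moreover $\Phi(x)\geq M\geq |u(x)|$ for $|x|\leq R_0$ and $\Phi=M$ on $\partial B_{R_0}$. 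The role of $B_{R_0}^{c}$ is that there the modified nonlinearity is dominated by the potential: by \eqref{hlambdak} and \eqref{del_3}, $0\leq h_{\lambda,k}(x,|u|^{2})\leq V(x)/\nu\leq V(x)$ on $B_{R_0}^{c}$ (recall $\nu=2\theta/(\theta-2)>1$), so there $u$ solves $(-\Delta)_{A}^{s}u=\big(h_{\lambda,k}(x,|u|^{2})-V(x)\big)u$ with a nonpositive coefficient. I would also record, before starting, that $|u(x)|\to 0$ as $|x|\to\infty$: this follows from $|u|\in L^{2^{*}_{s}}(\mathbb{R}^{N})\cap L^{\infty}(\mathbb{R}^{N})$ together with the standard local boundedness estimate for subsolutions of $(-\Delta)^{s}$ (and $|u|$ is such a subsolution on $B_{R_0}^{c}$, by the diamagnetic inequality and the sign of the coefficient), since both $\|u\|_{L^{2^{*}_{s}}(B_{2}(x_0))}$ and the corresponding nonlocal tail tend to $0$ as $|x_0|\to\infty$.

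Next, fix $\varepsilon>0$ and put $v_{\varepsilon}:=(|u|-\Phi-\varepsilon)^{+}$. Because $\Phi+\varepsilon\geq M+\varepsilon>|u|$ on $\overline{B_{R_0}}$ and $\Phi+\varepsilon\geq\varepsilon>|u|$ for $|x|$ large, the set $\{v_{\varepsilon}>0\}$ sits in a compact subset of $B_{R_0}^{c}$, bounded away from $\partial B_{R_0}$ and from the origin; hence $v_{\varepsilon}\in\mathcal{D}^{s,2}(\mathbb{R}^{N},\mathbb{R})$ (it is bounded, $\leq|u|$, and never feels the pole of $\Phi$), and the test function $\phi_{\varepsilon}:=v_{\varepsilon}\,u/|u|$ (set to $0$ where $u=0$, which is consistent since $v_{\varepsilon}>0$ forces $|u|>\Phi+\varepsilon>0$) belongs to $E$: $[\phi_{\varepsilon}]_{s,A}^{2}\leq[v_{\varepsilon}]_{s}^{2}+[u]_{s,A}^{2}$ and $\int V|\phi_{\varepsilon}|^{2}\leq\int V|u|^{2}$. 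Plugging $\phi_{\varepsilon}$ into the weak formulation \eqref{S_1} and using that $u\overline{\phi_{\varepsilon}}=v_{\varepsilon}|u|\geq0$ is real, together with the elementary pointwise magnetic inequality
$$\Re\Big[\big(u(x)-u(y)e^{i\Theta_{xy}}\big)\overline{\big(\phi_{\varepsilon}(x)-\phi_{\varepsilon}(y)e^{i\Theta_{xy}}\big)}\Big]\geq\big(|u(x)|-|u(y)|\big)\big(v_{\varepsilon}(x)-v_{\varepsilon}(y)\big),\qquad\Theta_{xy}:=A\big(\tfrac{x+y}{2}\big)\cdot(x-y),$$
which is obtained by writing $u=|u|e^{i\theta}$ and invoking $\cos(\cdot)\leq1$ and $|u(x)|v_{\varepsilon}(y)+|u(y)|v_{\varepsilon}(x)\geq0$, yields
$$\int\!\!\!\int_{\mathbb{R}^{2N}}\frac{\big(|u(x)|-|u(y)|\big)\big(v_{\varepsilon}(x)-v_{\varepsilon}(y)\big)}{|x-y|^{N+2s}}\,\mathrm{d}x\,\mathrm{d}y\leq\int_{\mathbb{R}^{N}}\big(h_{\lambda,k}(x,|u|^{2})-V(x)\big)|u|v_{\varepsilon}\,\mathrm{d}x\leq0,$$
the last inequality because the middle integrand is supported in $\{v_{\varepsilon}>0\}\subset B_{R_0}^{c}$, where $h_{\lambda,k}(x,|u|^{2})\leq V(x)/\nu\leq V(x)$.

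On the other hand, decomposing $|u|=(\Phi+\varepsilon)+v_{\varepsilon}-w_{\varepsilon}$ with $w_{\varepsilon}:=(\Phi+\varepsilon-|u|)^{+}\geq0$ and $v_{\varepsilon}w_{\varepsilon}\equiv0$, one has the pointwise identity
$$\big(|u(x)|-|u(y)|\big)\big(v_{\varepsilon}(x)-v_{\varepsilon}(y)\big)=\big(\Phi(x)-\Phi(y)\big)\big(v_{\varepsilon}(x)-v_{\varepsilon}(y)\big)+\big(v_{\varepsilon}(x)-v_{\varepsilon}(y)\big)^{2}+w_{\varepsilon}(x)v_{\varepsilon}(y)+w_{\varepsilon}(y)v_{\varepsilon}(x).$$
Dividing by $|x-y|^{N+2s}$ and integrating, the last two terms are nonnegative, the middle one gives $[v_{\varepsilon}]_{s}^{2}$, and the $\Phi$-term vanishes: since $v_{\varepsilon}$ is compactly supported in $B_{R_0}^{c}$, all the integrals converge absolutely, the mild singularity of $\Phi$ at $0$ is harmless, and an integration by parts gives $\int\!\!\!\int\frac{(\Phi(x)-\Phi(y))(v_{\varepsilon}(x)-v_{\varepsilon}(y))}{|x-y|^{N+2s}}\,\mathrm{d}x\,\mathrm{d}y=c\int v_{\varepsilon}(x)(-\Delta)^{s}\Phi(x)\,\mathrm{d}x=0$. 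Comparing with the inequality above forces $[v_{\varepsilon}]_{s}^{2}\leq0$, hence $v_{\varepsilon}$ is a.e. constant and therefore $v_{\varepsilon}\equiv0$ (it vanishes on $B_{R_0}$); that is, $|u(x)|\leq\Phi(x)+\varepsilon$ for $|x|\geq R_0$. Letting $\varepsilon\downarrow0$ gives $|u(x)|\leq\Phi(x)=(R_0/|x|)^{N-2s}\big\||u|\big\|_{\infty}$ for all $|x|\geq R_0$, which is the claim.

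The main obstacle is the absence of a Kato-type inequality for $(-\Delta)^{s}_{A}$ flagged in the Introduction: I replace it by the explicit test function $v_{\varepsilon}\,u/|u|$ and the elementary pointwise magnetic inequality above, which is legitimate precisely because Lemma \ref{lemma5.2} makes $|u|$ bounded and $\phi_{\varepsilon}$ admissible. The remaining effort is bookkeeping — justifying the decay $|u(x)|\to0$, the membership $v_{\varepsilon}\in\mathcal{D}^{s,2}$, and the absolute convergence of the integrals that makes the integration by parts against the $s$-harmonic $\Phi$ rigorous (here both $N>2s$ and the compactness of $\mathrm{supp}\,v_{\varepsilon}$ are used). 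I expect the decay-at-infinity step and these convergence verifications to absorb most of the technical work; the algebra is otherwise routine.
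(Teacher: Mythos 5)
Your proposal is correct and follows the same overall strategy as the paper: compare $|u|$ with the $s$-harmonic barrier $R_0^{N-2s}\big\||u|\big\|_{\infty}|x|^{2s-N}$, test the weak formulation \eqref{S_1} with a phase-corrected positive part, pass to the modulus via the pointwise inequality $\Re\big[(u(x)-u(y)e^{i\Theta})\overline{(\phi(x)-\phi(y)e^{i\Theta})}\big]\geq(|u(x)|-|u(y)|)(v(x)-v(y))$, kill the barrier term by $s$-harmonicity, and use $h_{\lambda,k}\leq V/\nu\leq V$ on $B_{R_0}^c$ to force the truncation to vanish. The one genuine difference is how you make the ``Kato'' test function admissible: the paper regularizes the modulus, testing with $\psi_{\delta}=\frac{u}{u_{\delta}}w$, $u_{\delta}=\sqrt{|u|^2+\delta^2}$, and sends $\delta\to0$ by dominated convergence; you instead shift the barrier by $\varepsilon$, so that on $\{v_{\varepsilon}>0\}$ one has $|u|>\varepsilon$ and the unregularized quotient $u/|u|$ is harmless, and you send $\varepsilon\to0$ only at the very end. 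Your device is a legitimate and arguably cleaner way around the obstruction the authors flag (and your decomposition into $\sigma(x)(v_\varepsilon(x)-v_\varepsilon(y))+v_\varepsilon(y)(\sigma(x)-\sigma(y)e^{i\Theta})$ does show $\phi_\varepsilon\in E$, although the bound is $[\phi_\varepsilon]_{s,A}\leq[v_\varepsilon]_s+2[u]_{s,A}$ rather than the Pythagorean inequality you wrote).

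Two cautions. First, your claim that $|u(x)|\to0$ as $|x|\to\infty$, used to make $\{v_{\varepsilon}>0\}$ compact, is justified by asserting that $|u|$ is a weak subsolution of $(-\Delta)^s$ on $B_{R_0}^c$ ``by the diamagnetic inequality'' --- but proving that subsolution property requires testing against $\frac{u}{|u|}\varphi$ for arbitrary nonnegative $\varphi$, which is exactly the Kato-type step the paper says is unavailable; as written this is circular in spirit and would itself need the $\delta$-regularization. Fortunately the decay is not needed: the elementary observation that $\Phi+\varepsilon>M\geq|u|$ on a full neighborhood of $\overline{B_{R_0}}$ already gives $\mathrm{supp}\,v_{\varepsilon}\subset\{|x|\geq R_0(1+c_{\varepsilon})\}$ for some $c_{\varepsilon}>0$, which is all that is required for the pairing with the $s$-harmonic $\Phi$ to vanish and for the integrals to converge (the paper's own $w$ is likewise supported in $\{|x|\geq R_0\}$ without being compactly supported). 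Second, the membership $v_{\varepsilon}\in\mathcal{D}^{s,2}(\mathbb{R}^N,\mathbb{R})$ does not follow merely from $v_{\varepsilon}\leq|u|$; one should compare $v_{\varepsilon}=(|u|-\min(\Phi+\varepsilon,M+\varepsilon))^{+}$ with the globally Lipschitz, bounded truncated barrier and estimate its Gagliardo seminorm directly. These are repairable bookkeeping points, not flaws in the strategy.
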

\begin{proof}
For the sake of simplicity, we denote $u = u_{\lambda, k}$.	Let $v$ be the $C^{\infty}(\mathbb{R}^N\backslash \{0\},\mathbb{R})$ function
\begin{equation}\label{Harmonic}
	v(x)=\frac{R_0^{N-2s}\big\||u|\big\|_{\infty}}{|x|^{N-2s}}, \quad x\not =0.
\end{equation}
Since $1/|x|^{N-2s}$ is $s$-harmonic (see for instance \cite{BucurVald}), it follows that $(-\Delta)^{s}v(x)=0$ in $\mathbb{R}^N\backslash \{0\}$. Note that
$$
|u| \leq \big \||u|\big\|_{\infty} \leq \frac{R_0^{N-2s}}{|x|^{N-2s}} \big \||u|\big\|_{\infty}, \quad \forall\,  |x|\leq R_0.
$$
Let us introduce the function $w\in \mathcal{D}^{s,2}(\mathbb{R}^N, \mathbb{R})$ defined by
$$
w(x) = \left\{
\begin{array}{ccl}
	(|u|-v)^+(x),& \mbox{if} & |x|\geq R_0,\\
	0, & \mbox{if} & |x|\leq R_0.\\
\end{array}
\right.
$$

\vspace{0,2cm}

It is worth mentioning that at this moment we could think of arguing the prove as in \cite{chao} and apply Kato’s inequality at function $\psi:=\frac{u}{|u|}w$. However, as pointed out in \cite{ambro3}, we are not able to use $\psi$ as test function and we do not have a Kato’s inequality for the fractional magnetic. Thus the arguments in \cite{chao} collapse and we are not able to use directly in our situation.  We overcome this difficulty arguing similarly to \cite{ambro3}, by introducing the function 
	\[
	 \psi_{\delta}:=\dsp\frac{u}{u_{\delta}}w, \quad \mbox{where} \hspace{0,3cm}  u_{\delta}=\sqrt{|u|^2 +\delta^2}, \hspace{0,3cm} \delta>0.
	\] 
   Thus, applying $\psi_{\delta}$ as test function in \eqref{S_1} and taking the limit when $\delta \rightarrow  0$, we use Dominated Convergence Theorem to obtain
	\begin{equation}\label{Ftest_10}
		\int \! \! \! \int_{\mathbb{R}^{2N}}\frac{\big(|u(x)|-|u(y)|\big)\big(w(x)- w(y)\big)}{|x-y|^{N+2s}}\,\mathrm{d}x\mathrm{d}y \leq \int_{\mathbb{R}^N}\left(-V(x)+h_{\lambda,k}(x,|u|^2)\right)|u|w\,\mathrm{d}x.
	\end{equation}
Now, using the fact that $v$ is $s$-harmonic and $\big[(u-v)(x)-(u-v)(y)\big]\big(|w(x)-w(y)\big) \geq \big|w(x)-w(y)\big|^2$ for all $x,y \in \mathbb{R}^N$, we can obtain
\begin{eqnarray}\label{Ftest_11}
\int \! \! \! \int_{\mathbb{R}^{2N}}\frac{\big(|u(x)|-|u(y)|\big)\big(w(x)- w(y)\big)}{|x-y|^{N+2s}}\,\mathrm{d}x\mathrm{d}y \!\!\!&=&\!\!\!\!\int \! \! \! \int_{\mathbb{R}^{2N}}\frac{\big((u-v)(x)-(u-v)(y)\big)\big(w(x)- w(y)\big)}{|x-y|^{N+2s}}\,\mathrm{d}x\mathrm{d}y \nonumber\\ 
&\geq& \!\!\!\! \int  \! \! \! \int_{\mathbb{R}^{2N}}\frac{\big(w(x)- w(y)\big)^2}{|x-y|^{N+2s}}\,\mathrm{d}x\mathrm{d}y.
\end{eqnarray}
Now, if  we write $\mathbb{R}^N=( B^c_{R_0}\cap \Theta) \cup (B^c_{R_0} \cap \Theta^{c}) \cup B_{R_0}$ where $\Theta:=\{x \in \mathbb{R}^N:|u(x)|\geq v(x)\}$
and since $w=0$ in $B^c_{R_0}\cap \Theta^c$ and $w=0$ in $B_{R_0}$, then we deduce
\begin{equation}\label{Ftest_12}
 \int_{\mathbb{R}^N}\big(-V(x)+h_{\lambda,k}(x,|u|^2)\big)|u|w\,\mathrm{d}x=\int_{B^c_{R_0}\cap \Theta}\big(-V(x)+h_{\lambda,k}(x,|u|^2)\big)|u|w\,\mathrm{d}x.
\end{equation}
Finally, by \eqref{Ftest_11}, \eqref{Ftest_12} and once that $h_{\lambda,k}(x,|u|^2)\leq \frac{1}{\nu}V(x)$, \eqref{Ftest_10} becomes
\begin{equation*}
\int \! \! \! \int_{\mathbb{R}^{2N}}\frac{\big|w(x)- w(y)\big|^2}{|x-y|^{N+2s}}\,\mathrm{d}x\mathrm{d}y \leq  \Big(\frac{1}{\nu}-1\Big)\int_{B^c_{R_0}\cap \Theta}V(x)|u|w\,\mathrm{d}x \leq 0,
\end{equation*}
showing that $w\equiv 0$. Therefore, $|u| \leq v$ in $|x|\geq R_0$ and this finishes the proof.
\end{proof}

\vspace{0,3cm}

\!\!\!\!\!\!\!\textcolor{black}{{\bf Proof of Theorem}} \ref{theorem}: Using the definition of $f_{\lambda,k}$, $(f_2)$ and combining Lemma \ref{lem5.3} with \eqref{eq}-$(V_3)$, it follows that
\begin{eqnarray}\label{ineq_4}
	\nu f_{\lambda,k_0}(|u_{\lambda,k_0}(x)|^2)&\leq& \nu f_{\lambda_0,k_0}(|u_{\lambda,k_0}(x)|^2)\nonumber \\
	&\leq&\nu \big(1+\lambda_0k_0^{\frac{q-p}{2}}\big)|u_{\lambda,k_0}(x)|^{2^*_s-2}  \nonumber\\
	&\leq& \nu \big(1+\lambda_0k_0^{\frac{q-p}{2}}\big)\Big( \frac{R_0^{N-2s}\||u_{\lambda,k_0}|\|_{\infty}}{|x|^{N-2s}}\Big)^{2^*_s-2}  \nonumber\\
	&\leq& \nu \big(1+\lambda_0k_0^{\frac{q-p}{2}}\big)\frac{R_0^{4s}}{|x|^{4s}}\||u_{\lambda,k_0}|\|_{\infty}^{\frac{4s}{N-2s}}\nonumber\\
	&\leq& \nu \big(1+\lambda_0k_0^{\frac{q-p}{2}}\big)\frac{V(x)}{\Lambda}k_0^{\frac{4s}{N-2s}}, \quad \forall \, |x|>R_0.
\end{eqnarray} 
Now, if $\Lambda\geq \Lambda_0:=\nu\big(1+\lambda_0k_0^{\frac{q-p}{2}}\big)$ and $\lambda$ in $[0,\lambda_0)$, then \eqref{ineq_4} implies that
\begin{equation*}
		\nu f_{\lambda,k_0}(|u_{\lambda,k_0}(x)|^2) \leq V(x),\quad \forall \, |x|>R_{0} \mbox{ \ and \ } \forall\, \lambda \in [0, \lambda_0).
\end{equation*}
Consequently, by \eqref{flambdak} and \eqref{hlambdak}, we deduce 
\begin{eqnarray}\label{ineq_5}
	h_{\lambda,k_0}(x,|u_{\lambda,k_0}(x)|^2)&=&\hat{f}_{\lambda,k_0}(|u_{\lambda,k_0}(x)|^2)\nonumber\\
	&=&f_{\lambda,k_0}(|u_{\lambda,k_0}(x)|^2)\nonumber\\
	&=&g(u_{\lambda,k_0}(x)) + \lambda |u_{\lambda,k_0}(x)|^{q-2}, \quad \mbox{a.e. in } \mathbb{R}^N,
\end{eqnarray}
for all $\lambda$ in $[0,\lambda_0)$ and $\Lambda \geq \Lambda_0$. Finally, by \eqref{ineq_5} and since $u_{\lambda,k_0}$ is a critical point of $J_{\lambda,k_0}$, we reach
\begin{eqnarray*}
0&=&\Re\left(\int \!\!\!\int_{\mathbb{R}^{2N} }\frac{\Big(u_{\lambda,k_0}(x)-u_{\lambda,k_0}(y)e^{i A\big(\frac{x+y}{2}\big).(x-y)}\Big)\overline{\Big(\phi(x)-\phi(y)e^{i A\big(\frac{x+y}{2}\big).(x-y)}\Big)}}{|x-y|^{N+2s}}\,\mathrm{d}x\mathrm{d}y\right) \nonumber\\
&&+\,\,\Re\left( \int_{\mathbb{R}^{N}}V(x)u_{\lambda,k_0}\overline{\phi}\,\mathrm{d}x -\int_{\mathbb{R}^{N}}h_{\lambda,k}(x,|u_{\lambda,k_0}|^2)u_{\lambda,k_0}\overline{\phi}\,\,\mathrm{d}x \right)\\
& =& \Re\left( \int_{\mathbb{R}^{N}}V(x)u_{\lambda,k_0}\overline{\phi}\,\mathrm{d}x-\int_{\mathbb{R}^{N}}g(|u_{\lambda,k_0}|^2)u_{\lambda,k_0}\overline{\phi}\,\mathrm{d}x - \lambda \int_{\mathbb{R}^N}|u_{\lambda,k_0}|^{q-2}u_{\lambda,k_0}\overline{\phi}\,\mathrm{d}x\right), 
\end{eqnarray*}
for all $\phi \in E$. Therefore, we concluded that $u_{\lambda,k_0}$ is a solution of problem \eqref{problem} for $\lambda$ in $[0, \lambda_0)$ and $\Lambda \geq \Lambda_0$. This finishes the proof of Theorem \ref{theorem}.$\blacksquare$\\

\begin{remark}\label{regular}
	Let us denote $u = u_{\lambda,k}$ the solution of \eqref{problem}. Note that if $V \in L^{\infty}(\mathbb{R}^N,\mathbb{R})$, then
	$$
	f(|u|^2)u-V(x)u+\lambda |u|^{q-2}u \in L^{\infty}(\mathbb{R}^N,\C),
	$$
	in view $|u| \in L^{\infty}(\mathbb{R}^N,\mathbb{R})$, obtained by Moser iteration method. 
	Therefore, in view of the regularity results established for the fractional Laplacian
	(see \cite{bisci} or \cite{silvetre} ), one can think that, under suitable regularity assumptions on $A$, it is
	possible to obtain more regularity on solutions to \eqref{problem}. Next, from the arguments in \cite[Remark 2.1]{ambro4} and using fractional regularity theory, we prove that $u \in C^{0,\alpha}(\mathbb{R}^N,\C)$. For instance, assume that $A \in
			L^{\infty}(\mathbb{R}^N,\mathbb{R}^N)$ and $s \in (0,\frac{1}{2})$. We set $u := v+i w$, with $v,w$ real valued. Assume that $u$ solve \eqref{problem}, that is,
	$u\in E$ and satisfies the following equation
	\begin{equation}\label{Delta_2}
		(-\Delta)_{A}^{s}u+V(x)u =g(|u|^2)u +\lambda |u|^{q-2}u, \quad \mbox{\ in \ } \mathbb{R}^N.
	\end{equation}
Thus, we may deduce that $v$ and $w$ solve, respectively,
	\begin{equation}\label{Delta_0}
		(-\Delta)^{s}v+V(x)v=g(|u|^2)v+\lambda|u|^{q-2}v-C_{A}(u,v)
	\end{equation}
	and
	\begin{equation}\label{Delta_00}
		(-\Delta)^{s}w+V(x)w=g(|u|^2)w+\lambda|u|^{q-2}w-D_{A}(u,w)
	\end{equation}
	where
	\begin{equation*}
		C_A(u,v)(x):=\int_{\mathbb{R}^N}\frac{v(y)\big[1-\cos\big(A(\frac{x+y}{2}\big)\cdot(x-y)\big)\big]+w(y)\sin\big (A(\frac{x+y}{2}).(x-y)\big)}{|x-y|^{N+2s}}\mathrm{d}y
	\end{equation*}
	and
	\begin{equation*}
		D_A(u,w)(x):=\int_{\mathbb{R}^N}\frac{w(y)\big[1-\cos\big(A(\frac{x+y}{2}\big).(x-y)\big)\big]-v(y)\sin\big (A(\frac{x+y}{2}).(x-y)\big)}{|x-y|^{N+2s}}\mathrm{d}y.
	\end{equation*}\\

\noindent \textit{Claim.} $C_{A}(u,v),  D_{A}(u,w) \in L^{\infty}(\mathbb{R}^N,\mathbb{R})$.\\

	Using the fact that $|u|\in L^{\infty}(\mathbb{R}^N,\mathbb{R})$, $A \in L^{\infty}(\mathbb{R}^N,\mathbb{R}^N)$, $|\sin(t)|, |\cos(t)|<1$, $|\sin(t)|\leq |t|$, $|1-\cos(t)|\leq \frac{t^2}{2}$ for all $t\in \mathbb{R}$ for $x \in \mathbb{R}^N$, $s \in(0, \frac{1}{2})$ and using coarea formula, we deduce
	\begin{eqnarray}
		|C_{A}(u,v)(x)| &\leq& \int_{|x-y|>1}\frac{2\|v\|_{\infty}+\|w\|_{\infty}}{|x-y|^{N+2s}}\mathrm{d}y   \nonumber \\
		& & + \int_{|x-y|<1}\frac{\|A\|_{\infty}^2\|v\|_{\infty}}{2|x-y|^{N+2s-2}}\mathrm{d}y + \int_{|x-y|<1}\frac{\|A\|_{\infty}\|w\|_{\infty}}{|x-y|^{N+2s-1}}\mathrm{d}y \nonumber \\
		&\leq& \omega_{N-1}2\|v\|_{\infty} \int_{1}^{+\infty}\frac{r^{N-1}}{r^{N+2s}}\,\mathrm{d}r+\omega_{N-1}\|w\|_{\infty} \int_{1}^{+\infty}\frac{r^{N-1}}{r^{N+2s}}\,\mathrm{d}r  \nonumber \\
			& & +\frac{\omega_{N-1}}{2}\|A\|_{\infty}^2\|v\|_{\infty} \int_{0}^{1}\frac{r^{N-1}}{r^{N+2s-2}}\,\mathrm{d}r +  \omega_{N-1}\|A\|_{\infty}\|w\|_{\infty}\int_{0}^{1}\frac{r^{N-1}}{r^{N+2s-1}}\,\mathrm{d}r \nonumber \\
		&<& M \nonumber,
	\end{eqnarray}
	for some $M>0$ indepedent of $x$, that is, $C_{A}(u,w) \in L^{\infty}(\mathbb{R}^N,\mathbb{R})$. In similar we verifed that $D_{A}(u,w) \in L^{\infty}(\mathbb{R}^N,\mathbb{R})$.
	Assume that claim is true, then by \eqref{Delta_0} and \eqref{Delta_00} it follow that 
	\begin{equation}\label{C}
		(-\Delta)^{s}v \in L^{\infty}(\mathbb{R}^N,\mathbb{R})
	\end{equation}
	and
	\begin{equation}\label{D}
		(-\Delta)^{s}w \in L^{\infty}(\mathbb{R}^N,\mathbb{R}).
	\end{equation}
	Therefore, invoking Proposition 2.1.9 in \cite{silvetre} to obtain that $v,w \in C^{0,\alpha}(\mathbb{R}^N,\mathbb{R})$ for any $\alpha<2s$, that is, $u\in C^{0,\alpha}(\mathbb{R}^N,\C)$.\\
	
\end{remark}
\begin{remark}\label{decay}
	We can also prove that if $u_{\lambda,k}$ is a solution of \eqref{problem}, then $|u_{\lambda,k}|$ decay
	at infinity. Indeed, in view of Remark \ref{regular} and since  $u_{\lambda,k} \in \mathcal{D}^{s,2}_{A}(\mathbb{R}^N,\C)$, it follows from \cite{stein} that  $|u_{\lambda,k}| \rightarrow 0$ as $|x| \rightarrow +\infty$. 
\end{remark}

 \end{document}